\theoremstyle{plain}
\newtheorem{theorem}{Theorem}[section]
\newtheorem{lemma}[theorem]{Lemma}
\newtheorem{prop}[theorem]{Proposition}
\newtheorem{cor}[theorem]{Corollary}
\theoremstyle{remark}
\newtheorem*{example}{Example}
\newtheorem{rem}[theorem]{Remark}
\newtheorem{defn}[theorem]{Definition}
\newtheorem{assump}[theorem]{Assumption}
\newtheorem{nrem}[theorem]{Notational Remark}
\newcommand{\R} {\mathbb{R}}
\newcommand{\N} {\mathbb{N}}
\newcommand{\E} {\mathbb{E}}
\newcommand{\p} {\mathbb{P}}
\DeclareMathOperator{\Tr}{Tr}
\DeclareMathOperator{\Var}{Var}
\DeclareMathOperator{\Cov}{Cov}
\DeclareMathOperator{\sech}{sech}
\newcommand{\caP}{{\mathcal P}}
\newcommand{\caL}{{\mathcal L}}
\newcommand{\caN}{{\mathcal N}}
\newcommand{\caO}{{\mathcal O}}
\newcommand{\bsH}{{\boldsymbol H}}
\newcommand{\bsx}{{\boldsymbol x}}
\newcommand{\wt}{\widetilde}
\newcommand{\ol}{\overline}
\newcommand{\beq}{ \begin{equation} }
\newcommand{\eeq}{ \end{equation} }
\newcommand{\dd}{\mathrm{d}}
\newcommand{\ii}{\mathrm{i}}
\renewcommand{\P}{\mathbb{P}}
\newcommand{\SNR}{\omega}
\newcommand{\tM}{\widetilde{M}}
\numberwithin{equation}{section} %\numberwithin{lem}{section}
\title{Asymptotic Normality of Log Likelihood Ratio and Fundamental Limit of the Weak Detection for Spiked Wigner Matrices}
\author{
	Hye Won Chung \footnote{School of Electrical Engineering, KAIST, Daejeon, 34141, Korea
		\newline email: \texttt{hwchung@kaist.ac.kr}}, \;
	Jiho Lee \footnote{Korea Science Academy of KAIST, Busan, 10547, Korea
		\newline email: \texttt{efidiaf@ksa.hs.kr}},	\;
	and Ji Oon Lee\footnote{Department of Mathematical Sciences, KAIST, Daejeon, 34141, Korea
		\newline email: \texttt{jioon.lee@kaist.edu}}}
\begin{document}

\maketitle

\begin{abstract}
We consider the problem of detecting the presence of a signal in a rank-one spiked Wigner model. For general non-Gaussian noise, assuming that the signal is drawn from the Rademacher prior, we prove that the log likelihood ratio (LR) of the spiked model against the null model converges to a Gaussian when the signal-to-noise ratio is below a certain threshold. The threshold is optimal in the sense that the reliable detection is possible by a transformed principal component analysis (PCA) above it. From the mean and the variance of the limiting Gaussian for the log-LR, we compute the limit of the sum of the Type-I error and the Type-II error of the likelihood ratio test. We also prove similar results for a rank-one spiked IID model where the noise is asymmetric but the signal is symmetric.
\end{abstract}

\section{Introduction} \label{sec:intro}

One of the most fundamental questions in statistics and data science is to detect a signal from noisy data. The simplest model for the `signal-plus-noise' data is the spiked Wigner matrix, where the signal is a vector and the noise is a symmetric random matrix. In this model, the data matrix is of the form
\[
	M = H+\sqrt{\lambda} \bsx \bsx^T,
\]
where the signal $\bsx$ is an $N$-dimensional vector and the noise matrix $H$ is an $N \times N$ Wigner matrix. (See Definition \ref{defn:wigner} and Assumption \ref{assump:wigner} for a precise definition.) The parameter $\lambda$ is the signal-to-noise ratio (SNR).

Signal detection/recovery problems for the spiked Wigner matrix have been widely studied in the past decades. One of the most natural ways of detecting the signal is to analyze the eigenvalues of the data matrix, especially its largest eigenvalues, which is called the principal component analysis (PCA). With the normalization $\E[|H_{ij}|^2]=N^{-1}$ for $i \neq j$ and $\| \bsx \| = 1$, it is well-known in random matrix theory that the bulk of the spectrum is supported on $[-2, 2]$ and the largest eigenvalue exhibits the following sharp phase transition. If $\lambda > 1$, the largest eigenvalue of $M$ separates from the bulk, and hence the signal can be reliably detected by PCA. On the other hand, if $\lambda < 1$, the largest eigenvalue converges to $2$ and its fluctuation does not depend on $\lambda$; thus, the largest eigenvalue carries no information on the signal and PCA fails. 

The spectral threshold $\lambda_c=1$ for the PCA, and it is optimal when the noise is Gaussian in the sense that reliable detection is impossible below the spectral threshold \cite{Perry2018}. If the noise is non-Gaussian, however, the conventional PCA is sub-optimal and can be improved by transforming the data entrywise \cite{NIPS_Barbier,Perry2018}. In this case, with an optimal entrywise transformation, under suitable conditions, the threshold is lowered to $1/F$, where $F$ is the Fisher information of the (off-diagonal) noise distribution, which is strictly larger than $1$ if the noise is non-Gaussian. (See \eqref{eq:FG} for its definition.)

Below the threshold $\lambda_c = 1/F$ for the non-Gaussian noise case, reliable detection is impossible \cite{Perry2018}, and it is only possible to consider hypothesis testing between the null model $\lambda=0$ and the alternative $\lambda>0$, which is called the weak detection. The fundamental limit of the weak detection can be proved by analyzing the performance of the likelihood ratio (LR) test, which minimizes the sum of the Type-I and Type-II errors by Neyman--Pearson lemma. For Gaussian noise, the log-LR coincides with the free energy of the Sherrington--Kirkpatrick (SK) model of spin glass. The limiting fluctuation of the free energy of the SK model is given by a Gaussian in the high-temperature case, and the result is directly applicable to the proof of asymptotic normality of the log-LR and the limiting error of the LR test.

While the performance of the LR test is of fundamental importance, to our best knowledge, no results have been proved when the noise is non-Gaussian. The main difficulty in the analysis of the LR test is that the connection between the log-LR and the free energy of the SK model is absent when the noise is non-Gaussian; though the free energy of the SK model converges to a normal distribution with non-Gaussian interaction for some cases \cite{ALR87,Baik-Lee2016}, it does not prove the Gaussian convergence of the log-LR. 

In this paper, we focus on the likelihood ratio between the null hypothesis $\bsH_0 : \lambda=0$ and the alternative hypothesis $\bsH_1 : \lambda = \SNR > 0$ for the subcritical case $\SNR < 1/F$ under the assumption that the distribution of the signal, called the prior, is Rademacher, i.e., the $x_i$'s are i.i.d. with $\p(\sqrt{N}x_i=1) = \p(\sqrt{N}x_i=-1) = 1/2$. Our goals are to prove that the asymptotic normality holds even with non-Gaussian noise under mild assumption on the noise and to compute the exact mean and the variance for the limiting Gaussian that enables us to find the limiting error of the LR test.

An interesting variant of the spiked Wigner matrix is the spiked IID matrix, where the entries of the noise matrix are independent and identically distributed without symmetry constraint. Such a model is natural when the $(i, j)$-entry and $(j, i)$-entry are sampled separately. While it seems natural to apply the singular value decomposition (SVD) for this case, it is in fact better to symmetrize the matrix first and then apply PCA to exploit the symmetry of the signal; for example, with the Gaussian noise, the threshold for the former $\lambda_c=1$, whereas $\lambda_c=1/2$ for the latter. The idea of symmetrization can even be combined with the entrywise transformation in \cite{Perry2018} if the noise is non-Gaussian to prove that reliable detection is possible if $\lambda > 1/(2F)$. However, to our best knowledge, no results are known for the subcritical case $\lambda < 1/(2F)$.

Our goal for the spiked IID model is to prove the results that correspond to those for the spiked Wigner matrices, including the asymptotic normality of the log-LR for the spiked IID matrices and the precise formulas for the mean and the variance for the limiting Gaussian, for $\lambda < 1/(2F)$. We remark that a consequence of the asymptotic normality of the log-LR and Le Cam's first lemma is the impossibility of reliable detection for $\lambda < 1/(2F)$, which establishes that the threshold for the reliable detection for the spiked IID matrices is given by $\lambda_c = 1/(2F)$ for the non-Gaussian noise case.

\subsection{Main contribution on spiked Wigner matrices} \label{subsec:intro_Wigner}

For the spiked Wigner matrices,
\begin{itemize}
\item (Asymptotic normality - Theorem \ref{thm:main}) we prove that the log-LR converges to a Gaussian, and
\item (Optimal error - Corollary \ref{cor:main}) we compute the limit of the sum of the Type-I and Type-II errors of the LR test.
\end{itemize}

Let $\p_0$ and $\p_1$ be the probability distributions of a spiked Wigner matrix $M$ under $\bsH_0$ and $\bsH_1$, respectively. If the noise is Gaussian, from the normalization in Definition \ref{defn:wigner} and Assumption \ref{assump:wigner}, the likelihood ratio of $\p_1$ with respect to $\p_0$ is given by
\beq \begin{split} \label{eq:Gaussian_LR}
	&\caL(M;\SNR) = \frac{\dd \p_1}{\dd \p_0} \\
	&:= \frac{1}{2^N} \sum_{\bsx} \prod_{i<j} \frac{\exp (-\frac{N}{2} (\sqrt{N} M_{ij}-\sqrt{\SNR N} x_i x_j)^2)}{\exp (-\frac{N}{2} (\sqrt{N} M_{ij})^2)} \prod_k \frac{\exp (-\frac{N}{2}(\sqrt{N} M_{kk}-\sqrt{\SNR N} x_k^2)^2)}{\exp (-\frac{N}{2}(\sqrt{N} M_{kk})^2)} \\
	&= \frac{1}{2^N} \sum_{\bsx} \prod_{i<j} \exp \left(\sqrt{\SNR} N^2 M_{ij} x_i x_j - \frac{\SNR}{2} \right) \prod_k \exp \left(N \sqrt{\SNR} M_{kk}- \frac{\SNR}{2} \right),
\end{split} \eeq
where we used that $x_i^2=N^{-1}$ for all $i$. The random off-diagonal term in \eqref{eq:Gaussian_LR},
\[
	\frac{1}{2^N} \sum_{\bsx} \prod_{i<j} \exp \left(\sqrt{\SNR} N^2 M_{ij} x_i x_j \right) = \frac{1}{2^N} \sum_{\bsx} \exp \left(\sqrt{\SNR} N^2 \sum_{i<j} M_{ij} x_i x_j \right),
\]
coincides with the free energy of the Sherrington--Kirkpatrick model for which the asymptotic normality is known. (The diagonal part is independent from the off-diagonal part and its asymptotic (log)-normality can be easily proved by the CLT.)

If the noise is non-Gaussian, however, then we instead have
\beq \label{eq:non-Gaussian_LR}
	\caL(M;\SNR) = \frac{1}{2^N} \sum_{\bsx} \prod_{i<j} \frac{p(\sqrt{N} M_{ij}-\sqrt{\SNR N} x_i x_j)}{p(\sqrt{N} M_{ij})} \prod_k \frac{p_d(\sqrt{N} M_{kk}-\sqrt{\SNR N} x_k^2)}{p_d(\sqrt{N} M_{kk})}\,,
\eeq
where $p$ and $p_d$ are the densities of the (normalized) off-diagonal terms and the diagonal terms, respectively. Applying the Taylor expansion, 
\[
	\frac{p(\sqrt{N} M_{ij}-\sqrt{\SNR N} x_i x_j)}{p(\sqrt{N} M_{ij})} \simeq 1 -\sqrt{\SNR N} \, \frac{p'(\sqrt{N} M_{ij})}{p(\sqrt{N} M_{ij})} x_i x_j + \frac{\SNR}{2N} \frac{p''(\sqrt{N} M_{ij})}{p(\sqrt{N} M_{ij})}\,,
\]
and after comparing it to the Taylor expansion of the exponential function, it is possible to rewrite the leading order terms of the off-diagonal part of the LR as
\beq \label{eq:heuristic_LR}
	\frac{1}{2^N} \sum_{\bsx} \exp \left( \sum_{i<j} (A_{ij} x_i x_j + B_{ij}) \right),
\eeq
where $A_{ij}$ and $B_{ij}$ depend only on $p(\sqrt{N} M_{ij})$ and its derivatives. (See \eqref{eq:LR_formula}-\eqref{eq:ABC} for a precise definition.) Since $A_{ij}$ and $B_{ij}$ are not independent, the asymptotic normality of \eqref{eq:heuristic_LR} is not obvious, and in fact, requires hard analysis.

The main technical difficulty in the analysis of the log-LR arises from that the mechanism for the Gaussian convergence of the terms containing $A_{ij}$, which we call the spin glass term, and those with $B_{ij}$, which we call the CLT term, are markedly different. While the latter can be readily proved by the CLT, the former is a result from the theory of spin glass based on the combinatorial analysis for the graph structure involving $A_{ij}$'s.
To overcome the difficulty, adopting the strategy of \cite{ALR87}, we decompose the spin glass term, $2^{-N} \sum_{\bsx} \exp ( \sum_{i<j} (A_{ij} x_i x_j) )$, into two parts, which are asymptotically orthogonal to each other, conditional on $(B_{ij})$. (See Proposition \ref{prop:Z_limit} for more detail.) We can then verify the part that depends on $(B_{ij})$ and express the dependence by a conditional expectation on $(B_{ij})$. Combining the $B$-dependent part of the spin glass term with the CLT term, we can prove the Gaussian convergence with a precise formula for the mean and the variance of the limiting distribution.

From the asymptotic normality, it is immediate to obtain the sum of the Type-I and Type-II errors of the LR test. The limiting error in Corollary \ref{cor:main} converges to $0$ as $\lambda \nearrow (1/F)$, which shows that the detection threshold is given by $\lambda_c = 1/F$. While the limiting error of the LR test for the case $\lambda > 1/F$ also converges to $0$, which can be indirectly deduced from the fact that the transformed PCA in \cite{Perry2018} can reliably detect the signal, our result is not directly applicable to this case; the asymptotic normality holds only in the regime $\lambda < 1/F$ and it is expected the asymptotic behavior of the log-LR changes if $\lambda > 1/F$, which corresponds to the low temperature regime in the theory of spin glass.

The Rademacher prior we consider in this work is one of the most natural models for the rank-one signal-plus-noise data, and it also corresponds to the most fundamental spin configuration in the SK model. Our method relies on this special structure of the spike at least in the technical level. We believe that our main result would change with different priors, e.g., the spherical prior instead of the Rademacher prior, but the change is restricted to a subleading term (proportional to $\SNR^2$) in the mean and the variance of the limiting Gaussian. Note that such a change would not occur with the Gaussian noise where the quadratic term (of $\SNR$) vanishes, which can be made rigorously since the asymptotic behavior of the log-LR with the spherical prior coincides that with many i.i.d. priors including the Rademacher prior \cite{AlaouiJordan2018}. See the discussion at the end of Section \ref{subsec:main} and also the numerical experiments in Section \ref{sec:ex} for details.

\subsection{Main contribution on spiked IID matrices} \label{subsec:intro_iid}

For the spiked IID matrices, we also prove the asymptotic normality of the log-LR (Theorem \ref{thm:iid}) and compute the limit of the sum of the Type-I and Type-II errors of the LR test (Corollary \ref{cor:iid}).

Suppose that the data matrix $Y$ is of the form
$Y = X+\sqrt{\lambda} \bsx \bsx^T$,
where the signal $\bsx$ is an $N$-dimensional vector and the noise $X$ is an $N \times N$ IID matrix. (See Definition \ref{defn:iid} for a precise definition.) If the noise is Gaussian, similar to the spiked Wigner case in \eqref{eq:Gaussian_LR}, we find that the likelihood ratio
\beq \begin{split} \label{eq:Gaussian_LR_iid}
	\caL(Y;\SNR) &:= \frac{\dd \p_1}{\dd \p_0} := \frac{1}{2^N} \sum_{\bsx} \prod_{i, j=1}^N \frac{\exp (-\frac{N}{2} (\sqrt{N} Y_{ij}-\sqrt{\SNR N} x_i x_j)^2)}{\exp (-\frac{N}{2} (\sqrt{N} Y_{ij})^2)} \\
	&= \frac{1}{2^N} \sum_{\bsx} \prod_{i<j} \exp \left(\sqrt{\SNR} N^2 (Y_{ij}+Y_{ji}) x_i x_j - \SNR \right) \prod_k \exp \left(N \sqrt{\SNR} Y_{kk}- \frac{\SNR}{2} \right).
\end{split} \eeq
Since $(Y_{ij}+Y_{ji})$ is again a Gaussian random variable whose variance is $2N^{-1}$, under $\bsH_0$, the exponent in the off-diagonal term $(\sqrt{\SNR} N^2 (Y_{ij}+Y_{ji}) x_i x_j - \SNR )$ is equal in distribution to $(\sqrt{2\SNR} N^2 Y_{ij} x_i x_j - \SNR )$. Thus, when compared to the exponent in the off-diagonal term in \eqref{eq:Gaussian_LR}, we find that the SNR $\SNR$ is effectively doubled, and in particular, reliable detection is impossible if $\SNR < 1/2$. Note that in this case it is possible to reliably detect the signal if $\SNR > 1/2$ by PCA for the symmetrized matrix $(Y+Y^T)/\sqrt{2}$, whereas the largest singular value of $M$ is separated from other singular values only when $\SNR > 1$ (see, e.g., Section 3.1 of \cite{benaych2012singular}).

If the noise is non-Gaussian, however, we face the same issue as in the spiked Wigner matrices; the log-LR can be decomposed into the spin glass term and the CLT term as in \eqref{eq:heuristic_LR}, which are not independent. The issue can be resolved by adapting the analysis for the spiked Wigner matrices, where we need to consider $A_{ij} + A_{ji}$ and $B_{ij} + B_{ji}$ in place of $A_{ij}$ and $B_{ij}$. It is then possible to prove the asymptotic normality of the log-LR with precise formulas for the mean and the variance.

The results in Theorem \ref{thm:iid} and Corollary \ref{cor:iid} show that reliable detection is impossible if $\lambda < 1/(2F)$. On the other hand, if we transform the data matrix entrywise and then symmetrize the transformed matrix, the largest eigenvalue of the resulting matrix becomes an outlier when $\lambda > 1/(2F)$, and hence reliable detection is possible by PCA. Thus, we can conclude that the threshold $\lambda_c = 1/(2F)$ for the reliable detection. See the discussion at the end of Section \ref{subsec:asymmetric} for details.

\subsection{Related works}

The phase transition of the largest eigenvalue of the spiked Wigner matrix has been extensively studied in random matrix theory. It is known as the BBP transition after the seminal work by Baik, Ben Arous, and P\'ech\'e \cite{BBP2005} for spiked (complex) Wishart ensembles. Corresponding results for the spiked Wigner matrix were proved under various assumptions \cite{Peche2006,FeralPeche2007,CapitaineDonatiFeral2009,Raj2011}. For the behavior of the eigenvector associated with the largest eigenvalue of the spiked Wigner matrix, we refer to \cite{Raj2011}.

The impossibility of the reliable detection for the case $\lambda <1$ with the Gaussian noise was considered by Montanari, Reichman, and Zeitouni \cite{Montanari2017} and Johnstone and Onatski \cite{johnstone2020testing}. The optimality of the PCA for the Gaussian noise, the sub-optimality of the PCA and the optimality of the transformed PCA for the non-Gaussian noise was proved by Perry, Wein, Bandeira, and Moitra \cite{Perry2018} under mild assumptions on the prior. We remark that the threshold $\lambda_c$ may not be exactly $1$ depending on the prior \cite{AlaouiJordan2018}.

The analysis on the LR test and its fundamental limit were studied by El Alaoui, Krzakala, and Jordan \cite{AlaouiJordan2018} under the assumption that the signal is an i.i.d. random vector and the noise is Gaussian. We remark that there exists a non-LR test based on the linear spectral statistics of the data matrix, which is optimal if the noise is Gaussian \cite{chung2019weak}.

The detection problem for the spiked rectangular model, introduced by Johnstone \cite{Johnstone2001}, has also been widely studied. We refer to \cite{Onatski2013,Onatski2014,el2018detection,pmlr-v139-jung21a,Dobriban2017} for the weak detection under Gaussian noise, including the optimal error of the hypothesis test, and \cite{Perry2018,pmlr-v139-jung21a} for the sub-optimality of the PCA under non-Gaussian noise. For the spiked IID matrices and their applications, we refer to \cite{chen2021asymmetry} and references therein.

The SK model is one of the most fundamental models of spin glass. The model was introduced by Sherrington and Kirkpatrick \cite{sherrington1975solvable} as a mean-field version of the Edwards--Anderson model \cite{edwards1975theory}. The (deterministic) limit of the free energy of the SK model was first predicted by Parisi \cite{parisi1980sequence}, and it was later rigorously proved by Talagrand \cite{talagrand2006parisi}. The fluctuation of the free energy for the SK model was studied by Aizenman, Lebowitz, and Ruelle \cite{ALR87}; they showed the Gaussian convergence of the fluctuation of the free energy in the high temperature case, which also holds with non-Gaussian noise. To the best of our best knowledge, the fluctuation in the low temperature case, including the zero-temperature case, still remains open.

\subsection{Organization of the paper}

The rest of the paper is organized as follows. In Section \ref{sec:prelim}, we precisely define the model and state the main results. In Section \ref{sec:ex}, we present some examples and conduct numerical experiments to compare the outcomes with the theoretical results. In Sections \ref{sec:proof} and \ref{sec:indep}, we prove our main results on the spiked Wigner matrices. We conclude the paper in Section \ref{sec:conclusion} by summarizing our results and proposing future research directions. Some details of the proof, including the proof for the results on the spiked IID matrices, can be found in Appendix.

\section{Main results} \label{sec:prelim}

\subsection{Definition of the model} \label{subsec:model}

We begin by precisely defining the model we consider in this paper. We suppose that the noise matrix is a Wigner matrix, which is defined as follows:

\begin{defn}[Wigner matrix] \label{defn:wigner}
	We say an $N \times N$ random matrix $H = (H_{ij})$ is a Wigner matrix if $H$ is a real symmetric matrix and $H_{ij}$ ($1\leq i\leq j \leq N$) are independent centered real random variables satisfying
	\begin{itemize}
	\item (Normalization) $N \E[H_{ij}^2]=1$ for all $1 \leq i < j \leq N$ and $N \E[H_{ii}^2]=w_2$ for some $w_2$, independent of $N$,
	\item (Finite moment) for any $D>0$, there exists a constant $C_D$, independent of $N$, such that $N^{\frac{D}{2}} \E[H_{ij}^D] \leq C_D$ for all $1 \leq i \leq j \leq N$.
	\end{itemize}
\end{defn}

For the data matrix $M$, we assume that it is a spiked Wigner matrix, i.e., $M = H+\sqrt{\lambda} \bsx \bsx^T$, and also additionally assume the following:

\begin{assump} \label{assump:wigner}
	For the spike $\bsx$, we assume that the normalized entries $\sqrt{N} x_i$ are i.i.d. random variables with Rademacher distribution. 
	
	For the noise matrix $H$, let $p$ and $p_d$ be the densities of the normalized off-diagonal entries $\sqrt{N} H_{ij}$ and the normalized diagonal entries $\sqrt{N} H_{ii}$, respectively. We assume the following:
	\begin{itemize}
	\item The density functions $p$ and $p_d$ are smooth, positive everywhere, and symmetric (about $0$).
	\item The functions $p$, $p_d$, and their all derivatives vanish at infinity.
	\item The functions $p^{(s)}/p$ and $p_d^{(s)}/p_d$ are polynomially bounded, i.e., for any positive integer $s$ there exist constants $C_s, r_s >0$, independent of $N$, such that $|p^{(s)}(x)/p(x)|, |p_d^{(s)}(x)/p_d(x)| \leq C_s |x|^{r_s}$ uniformly on $x$. (Here, $p^{(s)}$ and $p_d^{(s)}$ are the $s$-th derivatives of $p$ and $p_d$, respectively.)
	\end{itemize}
\end{assump}

Note that we assume the off-diagonal entries are identically distributed. While it is possible to consider a spiked Wigner matrix with non-identically distributed noise matrix by assuming certain conditions on their density functions, we refrain from it to avoid unnecessary complications. Similarly, we remark that some conditions in Definition \ref{defn:wigner} and Assumption \ref{assump:wigner} are due to technical reasons, especially the finite moment condition and the symmetric condition for the density of the noise, and we believe that it is possible to relax the conditions. However, for the sake of brevity, we do not attempt to relax them further in this paper.

\subsection{Spiked Wigner matrices} \label{subsec:main}

Recall that the SNR $\SNR$ for the alternative $\bsH_1$ is fixed. Our main result is the following asymptotic normality for the log-LR statistic when testing between $\bsH_0 : \lambda=0$ versus $\bsH_1 : \lambda = \SNR > 0$.

\begin{theorem} \label{thm:main}
	Suppose that $M$ is a spiked Wigner matrix satisfying Assumption \ref{assump:wigner}. Define
\beq \label{eq:FG}
	F := \int_{-\infty}^{\infty} \frac{(p'(x))^2}{p(x)} \dd x, \quad F_d := \int_{-\infty}^{\infty} \frac{(p_d'(x))^2}{p_d(x)} \dd x, \quad G := \int_{-\infty}^{\infty} \frac{(p''(x))^2}{p(x)} \dd x\,.
\eeq
	If $\omega F < 1$, then the log likelihood ratio $\log \caL(M;\lambda)$ of $\caP_1$ with respect to $\caP_0$ under $\bsH_0$ (defined in \eqref{eq:non-Gaussian_LR}) converges in distribution to $\caN(-\rho, 2\rho)$ as $N \to \infty$, where
\beq \label{eq:rho}
	\rho := -\frac{1}{4} \left( \log (1-\SNR F) + \SNR (F-2F_d) + \frac{\SNR^2}{4} (2F^2 - G) \right).
\eeq
\end{theorem}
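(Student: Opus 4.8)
The plan is to work under $\bsH_0$, where $M=H$; then the normalized off-diagonal entries $u_{ij}:=\sqrt N\,H_{ij}$ ($i<j$) are i.i.d.\ with density $p$, the normalized diagonal entries $v_k:=\sqrt N\,H_{kk}$ are i.i.d.\ with density $p_d$, and the two families are independent. Write $\delta:=\sqrt{\SNR/N}$ and $s_i:=\sqrt N\,x_i\in\{\pm1\}$, so $x_ix_j=s_is_j/N$ and $\sqrt{\SNR N}\,x_k^2=\delta$. The exact identity $p(u-\delta\epsilon)/p(u)=\exp\big(\wt A(u)\,\epsilon+B(u)\big)$ for $\epsilon\in\{\pm1\}$, with $\wt A(u)=\tfrac12\log\frac{p(u-\delta)}{p(u+\delta)}$ and $B(u)=\tfrac12\log\frac{p(u-\delta)p(u+\delta)}{p(u)^2}$, lets me recast \eqref{eq:non-Gaussian_LR}. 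Writing $\wt A_{ij}:=\wt A(u_{ij})$, $B_{ij}:=B(u_{ij})$, using $\exp(\wt A_{ij}s_is_j)=\cosh\wt A_{ij}\,(1+s_is_j\tanh\wt A_{ij})$ and summing over the subgraphs $E$ of $K_N$ in which every vertex has even degree gives
\[
\log\caL(M;\SNR)=\underbrace{\sum_{i<j}\big(B_{ij}+\log\cosh\wt A_{ij}\big)}_{=:V}\;+\;\underbrace{\log\Big(\sum_{E\ \mathrm{even}}\ \prod_{(ij)\in E}\tanh\wt A_{ij}\Big)}_{=:U}\;+\;\underbrace{\sum_k\log\frac{p_d(v_k-\delta)}{p_d(v_k)}}_{=:D}.
\]
I will analyze $V$ and $D$ by elementary expansions and $U$ by a spin-glass argument, then prove that $(V,U,D)$ is asymptotically jointly Gaussian with diagonal covariance, so that $\log\caL$ converges to the sum of the three limits.

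For $V$: since $B$ and $\log\cosh\wt A$ are both even in $\delta$, a Taylor expansion controlled uniformly by the polynomial bounds of Assumption \ref{assump:wigner} yields the cancellation
\[
B_{ij}+\log\cosh\wt A_{ij}=\frac{\SNR}{2N}\,\frac{p''(u_{ij})}{p(u_{ij})}+\frac{\SNR^2}{N^2}\Big(\tfrac{1}{24}\tfrac{p^{(4)}(u_{ij})}{p(u_{ij})}-\tfrac18\big(\tfrac{p''(u_{ij})}{p(u_{ij})}\big)^{2}\Big)+O\big(N^{-3}\,\mathrm{poly}(u_{ij})\big),
\]
the $(p'/p)^2$ contributions of the two pieces cancelling exactly. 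Because $\int p''=\int p^{(4)}=0$, the first term is a sum of $\binom{N}{2}$ i.i.d.\ centered variables of variance $G$ and converges by the CLT to $\caN(0,\SNR^2G/8)$; the second term concentrates at $-\SNR^2G/16$ and the remainder is $o_P(1)$, so $V\Rightarrow\caN(-\SNR^2G/16,\ \SNR^2G/8)$. For $D$: each summand equals $-\delta\,\frac{p_d'(v_k)}{p_d(v_k)}+\tfrac{\delta^2}{2}\big(\tfrac{p_d''}{p_d}-(\tfrac{p_d'}{p_d})^2\big)(v_k)+O(\delta^3\,\mathrm{poly})$; the linear part is centered (odd in $v_k$, $p_d$ symmetric) with limit $\caN(0,\SNR F_d)$, the quadratic part concentrates at $-\SNR F_d/2$, and the rest vanishes, giving $D\Rightarrow\caN(-\SNR F_d/2,\ \SNR F_d)$. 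Being a function of the diagonal entries only, $D$ is independent of $(V,U)$.

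The core of the proof is $U$. Set $t_{ij}:=\tanh\wt A_{ij}$: these are i.i.d.\ for $i<j$, centered ($\wt A$ is odd in $u$, $p$ symmetric), with $\E[t_{ij}^2]=\SNR F\,N^{-1}(1+o(1))$ and all moments controlled. As in \cite{ALR87}, I expand the even-subgraph sum by connected components: the dominant ones are the simple cycles, and every other connected even subgraph, as well as components made of overlapping cycles, contributes $o_P(1)$ — uniformly after truncating the cycle length at a large $K$, which is where the hypothesis $\SNR F<1$ enters. The cycle sums $X_k^{(N)}:=\sum_{k\text{-cycles }C}\prod_{(ij)\in C}t_{ij}$, $k\ge3$, converge jointly (method of moments: only pairings of closed walks with coinciding edge sets survive; odd and cross moments vanish) to independent $X_k\sim\caN(0,v_k)$ with $v_k:=(\SNR F)^k/(2k)$; in each cycle sum one may replace $t_{ij}$ by $-\delta\,p'(u_{ij})/p(u_{ij})$ at no cost. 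Summing over unions of vertex-disjoint cycles and using the Hermite identity $\sum_{\ell\ge0}\frac{1}{\ell!}\,v_k^{\ell/2}\,\mathrm{He}_\ell\!\big(X_k^{(N)}/\sqrt{v_k}\big)=\exp\big(X_k^{(N)}-v_k/2\big)$ yields
\[
\sum_{E\ \mathrm{even}}\ \prod_{(ij)\in E}\tanh\wt A_{ij}\ \Longrightarrow\ \prod_{k\ge3}\exp\big(X_k-\tfrac{v_k}{2}\big),
\]
so $U\Rightarrow\sum_{k\ge3}(X_k-v_k/2)\sim\caN\big(-\tfrac12\sum_{k\ge3}v_k,\ \sum_{k\ge3}v_k\big)$; with $\sum_{k\ge3}x^k/k=-\log(1-x)-x-x^2/2$ at $x=\SNR F$ this is $\caN\big(\tfrac14\log(1-\SNR F)+\tfrac{\SNR F}{4}+\tfrac{\SNR^2F^2}{8},\ -\tfrac12\log(1-\SNR F)-\tfrac{\SNR F}{2}-\tfrac{\SNR^2F^2}{4}\big)$. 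The $-v_k/2$ Wick shift is exactly what produces the $\log(1-\SNR F)$ term in $\rho$.

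Finally I must upgrade the marginals to joint convergence of $(V,U,D)$ with diagonal limiting covariance. $D$ separates off by independence from the start; for $V$ versus $U$, note that $V$ is assembled from functions of the $u_{ij}$ that are \emph{even} in each $u_{ij}$ while $U$'s leading content is a polynomial in the $t_{ij}$, which are \emph{odd} in each $u_{ij}$, so every mixed moment contains a vanishing factor $\E[\mathrm{odd}]$ or $\E[\mathrm{odd}\cdot\mathrm{even}]$; the same bookkeeping, carried through the joint moment method, gives joint Gaussianity with zero cross-covariances. Adding the three independent limits gives $\log\caL(M;\SNR)\Rightarrow\caN(-\rho,2\rho)$ with $\rho$ as in \eqref{eq:rho}; the identity $2(\text{mean})=-(\text{variance})$ is forced by $\E_0[\caL]=1$ and serves as a consistency check. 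I expect the main obstacle to be the spin-glass step: rigorously passing from the full even-subgraph sum to $\prod_{k}\exp(X_k-v_k/2)$, i.e.\ bounding all non-cycle and cycle-overlap contributions uniformly in $N$ (with the $k$-truncation justified by $\SNR F<1$) and running the joint moment computation that delivers simultaneously the Gaussian limits of the $X_k$ and their asymptotic independence from $V$.
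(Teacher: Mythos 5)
Your proposal is correct and follows the same overall strategy as the paper (an ALR-style spin-glass analysis of the sign-averaged part together with a CLT for the remainder and for the diagonal), but it streamlines the bookkeeping in two genuine and worthwhile ways, so the comparison is worth spelling out.

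First, because the spike has Rademacher prior, $s_is_j\in\{\pm1\}$, so your identity
\[
\frac{p(u-\delta\epsilon)}{p(u)}=\exp\bigl(\wt A(u)\,\epsilon+B(u)\bigr),\qquad
\wt A=\tfrac12\log\tfrac{p(u-\delta)}{p(u+\delta)},\quad B=\tfrac12\log\tfrac{p(u-\delta)p(u+\delta)}{p(u)^2},
\]
is \emph{exact}, not a Taylor approximation. The paper instead defines $A_{ij},B_{ij},C_{ij}$ by truncating a Taylor expansion of $\log\big(p(\sqrt N M_{ij}-\sqrt{\SNR N}x_ix_j)/p(\sqrt N M_{ij})\big)$ and then has to carry $\caO(N^{-5/2})$ remainders through the argument. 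Your split $\log\caL=V+U+D$ with $V=\sum_{i<j}(B_{ij}+\log\cosh\wt A_{ij})$ and $U=\log\sum_{E\ \mathrm{even}}\prod_{(ij)\in E}\tanh\wt A_{ij}$ is also exact; the paper has the same three pieces (its $\zeta'\approx\sum_{i<j}\log\cosh(A_{ij}/N)$, its ``CLT term'' $\sum B_{ij}+C_{ij}$, and its $\log\zeta$), but only up to truncation error.

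Second, by grouping $\log\cosh\wt A_{ij}$ with $B_{ij}$ into $V$ from the start, and noting that the $(p'/p)^2$ parts cancel so that $V$ is, to leading order, $\tfrac{\SNR}{2N}\sum_{i<j}p''(u_{ij})/p(u_{ij})$, you avoid entirely the paper's conditional-variance machinery. The paper keeps $\zeta'$ and $\sum B_{ij}$ separate, which forces it to split $\zeta'$ into a $B$-measurable part $\tfrac1{2N^2}\sum\E_B[A_{ij}^2]$ plus an orthogonal residual $U$ with variance $\theta=\tfrac{\SNR^2}{8}\E[\Var_B((P^{(1)}_{12})^2)]$; the $\theta$ contribution then cancels against a matching $\Var_B$ term in the CLT-part variance at the very end. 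Your $V$ is a single i.i.d.\ sum of even functions of the $u_{ij}$ with variance $\SNR^2 G/8$ directly, with no conditional-variance terms to introduce and cancel.

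Third, your independence argument for $(V,U)$ — conditioning on the even-function $\sigma$-algebra generated by $|u_{ij}|$, under which the $t_{ij}$ are conditionally independent and symmetric, so the conditional law of $U$ converges to a fixed Gaussian — is the cleaner counterpart of the paper's assertion that $\log\zeta$ and $\zeta'$ are asymptotically orthogonal in $L^2_B$. One caveat on wording: your phrase ``$\E[\mathrm{odd}\cdot\mathrm{even}]=0$'' is only valid because $p$ is assumed symmetric, which you do use but should state explicitly; and the passage from vanishing cross-moments to joint Gaussianity with diagonal covariance still requires running the full joint moment-method computation (you flag this as the main obstacle, and indeed it is also the hard, unelaborated step in the paper's proof of Proposition \ref{prop:Z_limit}). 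The Hermite-generating-function packaging of the cycle sums is a nonstandard but equivalent rephrasing of the paper's $\zeta\approx\prod_\gamma(1+w(\gamma))=\exp\bigl(\xi-\tfrac12\eta+\caO(N^{-1})\bigr)$ step; the substantive content (only simple, essentially vertex-disjoint cycles contribute; the cycle sums $X_k^{(N)}$ are jointly asymptotically Gaussian and independent; $\SNR F<1$ controls the tail over $k$) is the same as Lemmas \ref{lem:zeta_close} and \ref{lem:w_gamma}.

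In short: same proof architecture, but your exact $\wt A$--$B$ identity, the $(p'/p)^2$ cancellation inside $V$, and the symmetry-based independence argument eliminate a fair amount of the paper's auxiliary bookkeeping. What you would still need to write out rigorously is precisely what the paper proves in Section \ref{sec:indep} and Lemma \ref{lem:w_gamma}: the truncation in cycle length, the bound on non-cycle and overlapping-cycle subgraphs, and the joint moment computation establishing Gaussianity and asymptotic independence.
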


\begin{rem} \label{rem:main}
Applying Le Cam's third lemma, we also find from Theorem \ref{thm:main} that the log likelihood ratio $\log \caL(M;\lambda)$ under $\bsH_1$ converges in distribution to $\caN(\rho, 2\rho)$.
\end{rem}

If the off-diagonal density $p$ is Gaussian, we find that $F=1$, $G=2$, and the result in Theorem \ref{thm:main} coincides with Theorem 2 (and Remark below it) of \cite{AlaouiJordan2018}, which states that $\log \caL(M;\lambda)$ converges in distribution to $\caN(-\rho', 2\rho')$ where
\[
	\rho' = -\frac{1}{4} \big( \log (1-\SNR) + \SNR (1-2F_d) \big).
\]
From the comparison with the Gaussian case, ignoring the term $(\SNR^2/4) (2F^2 - G)$ in \eqref{eq:rho}, the result in Theorem \ref{thm:main} heuristically shows that the effective SNR is $\SNR F$ when the noise is non-Gaussian. 

The change of the effective SNR with non-Gaussian noise can be understood as follows: under $\bsH_1$, if we apply a function $q$ to a normalized entry $\sqrt{N} M_{ij}$, then by the Taylor expansions,
\beq \label{eq:q}
	q(\sqrt{N} M_{ij}) = q(\sqrt{N} H_{ij} + \sqrt{\SNR N} x_i x_j) \approx q(\sqrt{N} H_{ij}) + \sqrt{\SNR N} q'(\sqrt{N} H_{ij}) x_i x_j.
\eeq
Approximating the right-hand side further by
\[ \begin{split}
	q(\sqrt{N} H_{ij}) + \sqrt{\SNR N} q'(\sqrt{N} H_{ij}) x_i x_j & \approx q(\sqrt{N} H_{ij}) + \sqrt{\SNR N} \E[q'(\sqrt{N} H_{ij})] x_i x_j \\
	&= \sqrt{N} \left( \frac{q(\sqrt{N} H_{ij})}{\sqrt{N}} + \sqrt{\SNR} \E[q'(\sqrt{N} H_{ij})] x_i x_j \right),
\end{split} \]
we find that the matrix obtained by transforming $M$ entrywise via $q$ is of the form $\sqrt{N} (Q + \sqrt{\SNR'} \bsx \bsx^T)$, with $Q_{ij} =q(\sqrt{N} H_{ij})/\sqrt{N}$ and $\sqrt{\SNR'} = \sqrt{\SNR} \E[q'(\sqrt{N} H_{ij})]$, which shows that the SNR is effectively changed. The approximation can be made rigorous \cite{Perry2018}, and after optimizing $q$, it can be shown that effective SNR is $\SNR F$, which is independent of the prior under mild assumptions. We remark that the optimal $q$ is given by $-p'/p$, and it is unique up to a constant factor.

An immediate consequence of Theorem \ref{thm:main} is the following estimate for the error probability of the LR test.

\begin{cor} \label{cor:main}
	Suppose that the assumptions of Theorem \ref{thm:main} hold. Then, the error probability of the likelihood ratio test 
\[
	\mathrm{err}(\SNR) := \p( \mathrm{Reject}\, \bsH_0 | \bsH_0 ) + \p( \mathrm{Do} \,\,  \mathrm{not} \,\,  \mathrm{reject}\, \bsH_0 | \bsH_1 )
\]
converges to
\[
	\mathrm{erfc} \left( \frac{\sqrt{\rho}}{2} \right) = \frac{2}{\sqrt{\pi}} \int_{\frac{\sqrt{\rho}}{2}}^{\infty} e^{-x^2} \dd x
\]
as $N \to \infty$, where $\rho$ is as defined in \eqref{eq:rho}.
\end{cor}

The proof of Corollary \ref{cor:main} from Theorem \ref{thm:main} is standard; we refer to the proof of Theorem 2 of \cite{chung2019weak}. 

Note that the error $\mathrm{err}(\SNR)$ converges to $0$ as $\SNR F \nearrow 1$, which is consistent with the discussion below Theorem \ref{thm:main} that the effective SNR is $\SNR F$.

\begin{rem} \label{rem:main_cor}
Since the likelihood ratio test minimizes the sum of the Type-I error and the Type-II error by the Neyman--Pearson lemma, we find that for any hypothesis test $\bsH_0$ versus $\bsH_1$, $\mathrm{err}(\SNR)$ is asymptotically bounded above by $\mathrm{erfc}(\sqrt{\rho}/2)$.
\end{rem}

The most non-trivial part in Theorem \ref{thm:main} and Corollary \ref{cor:main} is the third term of $\rho$ in \eqref{eq:rho},
\beq \label{eq:error_third_term}
	\frac{\SNR^2}{4} (2F^2 - G).
\eeq
We conjecture that this term is due to the Rademacher prior and it would change if we assume a different prior. Our heuristic argument for it is as follows. In \cite{chung2019weak}, an algorithm was proposed for hypothesis testing based on the linear spectral statistics (LSS) of the data matrix $M$, which is optimal if the noise is Gaussian; the LSS of the matrix $M$ whose eigenvalues are $\lambda_1 \geq \lambda_2 \geq \dots \geq \lambda_N$ is
\[
	\sum_{i=1}^N f(\lambda_i)
\]
for a function $f$ continuous on an open neighborhood of $[-2, 2]$. If the noise is non-Gaussian, the algorithm in \cite{chung2019weak} can be improved by pre-transforming the data matrix entrywise as in \eqref{eq:q}. The error of the improved algorithm is
\beq \label{eq:wt_rho}
	\mathrm{erfc} \left( \sqrt{\wt \rho}/2 \right),
\eeq
where
\[
	\wt \rho = -\frac{1}{4} \left( \log (1-\SNR F) + \SNR (F-2F_d) + \frac{\SNR^2}{4} (2F^2 - \wt G) \right)
\]
with
\[
	\wt G = \left( \int_{-\infty}^{\infty} \frac{(p'(x))^2 p''(x)}{p(x)} \dd x \right)^2 \Big/ \left( \frac{3}{2} \int_{-\infty}^{\infty} \frac{(p'(x))^2 p''(x)}{p(x)} \dd x - F^2 \right).
\]
The error \eqref{eq:wt_rho} differs from the error in Corollary \ref{cor:main} only in the term involving $\SNR^2$. The hypothesis test based on the eigenvalues are spherical in nature, since it does not depend on the eigenvectors, or specific directions. It means that the prior naturally associated with the LSS-based test is the spherical prior, which is the uniform distribution on the unit sphere. We thus conjecture that the error term in \eqref{eq:error_third_term} depends on the prior of the model.

\subsection{Spiked IID matrices} \label{subsec:asymmetric}

The results on the spiked Wigner matrices in Theorem \ref{thm:main} can be naturally extended to the spiked IID matrices defined as follows:

\begin{defn}[IID matrix] \label{defn:iid}
	We say an $N \times N$ real random matrix $X = (X_{ij})$ is an IID matrix if $X_{ij}$ ($1\leq i, j \leq N$) are independent and identically distributed centered real random variables satisfying that (i) $N \E[X_{ij}^2]=1$ and (ii) for any $D>0$, there exists a constant $C_D$, independent of $N$, such that $N^{\frac{D}{2}} \E[X_{ij}^D] \leq C_D$.
\end{defn}

For a spiked IID matrix of the form $Y = X+\sqrt{\lambda} \bsx \bsx^T$, we have following result.

\begin{theorem} \label{thm:iid}
	Suppose that $Y = X+\sqrt{\lambda} \bsx \bsx^T$ is a spiked IID matrix and Assumption \ref{assump:wigner} holds for $Y$ where $p$ is the density of the normalized entry $\sqrt{N} X_{ij}$. Let
\beq
	F := \int_{-\infty}^{\infty} \frac{(p'(x))^2}{p(x)} \dd x, \quad G := \int_{-\infty}^{\infty} \frac{(p''(x))^2}{p(x)} \dd x\,.
\eeq
	If $2 \omega F < 1$, then the log likelihood ratio $\log \caL(Y;\lambda)$ of $\caP_1$ with respect to $\caP_0$ under $\bsH_0$, defined by
\beq
	\caL(Y;\SNR) := \frac{1}{2^N} \sum_{\bsx} \prod_{i, j=1}^N \frac{p(\sqrt{N} Y_{ij}-\sqrt{\SNR N} x_i x_j)}{p(\sqrt{N} Y_{ij})}
\eeq
converges in distribution to $\caN(-\rho^*, 2\rho^*)$ as $N \to \infty$, where
\beq \label{eq:rho_star}
	\rho^* := -\frac{1}{4} \left( \log (1- 2 \SNR F) + \frac{\SNR^2}{2} (2F^2 - G) \right).
\eeq
\end{theorem}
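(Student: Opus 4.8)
The plan is to reduce Theorem~\ref{thm:iid} to the already-established machinery for the spiked Wigner case by a symmetrization argument at the level of the likelihood ratio, rather than re-doing the entire spin-glass analysis from scratch. First I would perform the Taylor expansion of the ratio $p(\sqrt N Y_{ij}-\sqrt{\SNR N}x_ix_j)/p(\sqrt N Y_{ij})$ exactly as in \eqref{eq:heuristic_LR}, but now keeping \emph{all} ordered pairs $(i,j)$ with $i\neq j$ (and the diagonal terms separately). Writing $A_{ij}$ and $B_{ij}$ for the coefficients attached to $x_ix_j$ and to the constant term respectively (defined as in \eqref{eq:ABC}, with $p$ the IID-noise density), the off-diagonal part of $\log\caL(Y;\SNR)$ becomes, to leading order,
\[
	\log\Big( \frac{1}{2^N}\sum_{\bsx}\exp\Big( \sum_{i<j}\big( (A_{ij}+A_{ji})\, x_ix_j + B_{ij}+B_{ji}\big) \Big)\Big),
\]
using $x_ix_j=x_jx_i$. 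The key structural observation is that $(A_{ij}+A_{ji})$ and $(B_{ij}+B_{ji})$ are sums of two i.i.d.\ copies of the corresponding Wigner-case quantities, since the $N^2$ entries $Y_{ij}$ are mutually independent under $\bsH_0$. Hence this expression is formally identical to the spiked-Wigner log~LR \emph{with doubled variances}: $\Var(A_{ij}+A_{ji})=2\Var(A_{ij})$, and likewise for the cross-covariance structure between the $A$'s and $B$'s. Everything in Sections~\ref{sec:proof}--\ref{sec:indep} is driven by those second-moment parameters, so the effect is precisely to replace $\SNR$ by $2\SNR$ wherever it enters through the off-diagonal (spin-glass and CLT) contributions.

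Second, I would track this substitution through the three pieces of $\rho$ in \eqref{eq:rho}. The term $\log(1-\SNR F)$ comes from the spin-glass partition function (the Aizenman--Lebowitz--Ruelle-type expansion of $\frac1{2^N}\sum_\bsx\exp(\sum A_{ij}x_ix_j)$), and with the doubled variance it becomes $\log(1-2\SNR F)$; the quadratic term $\frac{\SNR^2}{4}(2F^2-G)$, which arises from the $B$-dependent part of the spin-glass term combined with the CLT term, becomes $\frac{(2\SNR)^2}{16}(2F^2-G)=\frac{\SNR^2}{4}(2F^2-G)$ --- note the coefficient is $1/2$ of the naive doubling because $2F^2-G$ itself carries one factor that is \emph{not} doubled; I would verify this bookkeeping carefully against \eqref{eq:ABC}. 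The linear term $\SNR(F-2F_d)$ disappears entirely: the $F$ part of it is a diagonal-of-the-interaction self-energy correction from the $i<j$ sum that in the symmetrized model gets absorbed into the $\log(1-2\SNR F)$ expansion, and the $-2F_d$ part came from the diagonal entries $p_d$, which in Definition~\ref{defn:iid} are governed by the \emph{same} density $p$ as the off-diagonal entries, so $F_d=F$ and the diagonal contributes only an $O(N^{-1})$ correction to the log~LR (its CLT fluctuation is $o(1)$ because there are only $N$ diagonal terms each of size $O(N^{-1/2})$). Collecting, the mean is $-\rho^*$ with $\rho^*$ as in \eqref{eq:rho_star}, and the variance is $2\rho^*$ by the same Gaussian-calculus identity (mean $=-\frac12\times$ variance) that holds in the Wigner case.

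Third, rather than leaning on heuristics I would make the reduction rigorous by re-running the conditioning argument of Proposition~\ref{prop:Z_limit}: decompose the symmetrized spin-glass term $\frac1{2^N}\sum_\bsx\exp(\sum_{i<j}(A_{ij}+A_{ji})x_ix_j)$ into a part asymptotically orthogonal to $(B_{ij}+B_{ji})$ and a part expressible as a conditional expectation given $(B_{ij}+B_{ji})$, exactly as in the Wigner proof, checking that the combinatorial graph estimates (counting self-avoiding closed walks, controlling the contribution of cycles of each length) go through verbatim once $\Var(A_{ij})$ is replaced by $\Var(A_{ij}+A_{ji})$ and the polynomial-boundedness / finite-moment inputs from Assumption~\ref{assump:wigner} and Definition~\ref{defn:iid} are invoked. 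This is the step I expect to be the main obstacle: one must make sure that no cross term $A_{ij}A_{ji}$ or $A_{ij}B_{ji}$ introduces a genuinely new graphical structure absent from the symmetric model --- in the symmetric model the only building block is a single edge weight on $\{i,j\}$, and here $A_{ij}+A_{ji}$ is again a single (now more variable) edge weight, so the combinatorics is unchanged, but the moment bounds on this new weight need the finite-moment condition applied to both $X_{ij}$ and $X_{ji}$, and the independence that makes $A_{ij}\perp A_{ji}$ must be used to split expectations. I would then conclude by combining the $B$-dependent part with the (negligible) diagonal CLT term and the surviving Gaussian fluctuation of the orthogonal spin-glass part, obtaining convergence to $\caN(-\rho^*,2\rho^*)$; Corollary~\ref{cor:iid} then follows from this exactly as Corollary~\ref{cor:main} follows from Theorem~\ref{thm:main}.
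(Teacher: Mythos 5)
Your overall strategy is exactly the paper's: combine $A_{ij}+A_{ji}$ and $B_{ij}+B_{ji}$ into single edge weights, re-run the spin-glass conditioning argument of Proposition~\ref{prop:Z_limit} with the new (more variable) weights, and note that the graph combinatorics is unchanged because the object attached to an edge $\{i,j\}$ is still a single random variable. That reduction is correct, and your concern about cross terms $A_{ij}A_{ji}$ is handled, as you say, by the independence of $X_{ij}$ and $X_{ji}$. However, two pieces of your bookkeeping are wrong and would need to be fixed to reach the stated formula for $\rho^*$.

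First, your evaluation of the quadratic term is numerically incorrect. You write it as $\frac{(2\SNR)^2}{16}(2F^2-G)=\frac{\SNR^2}{4}(2F^2-G)$, but $\rho^*$ in \eqref{eq:rho_star} carries $\frac{\SNR^2}{2}(2F^2-G)$. The reason the naive substitution $\SNR\mapsto 2\SNR$ in the quadratic part of $\rho$ (which would give $\SNR^2(2F^2-G)$) is wrong is not a vague ``one factor is not doubled,'' but the concrete identity
\[
\E\bigl[(Q^{(1)}_{ij}+Q^{(1)}_{ji})^4\bigr]=2\,\E\bigl[(Q^{(1)}_{12})^4\bigr]+6\,\E\bigl[(Q^{(1)}_{12})^2\bigr]^2 ,
\]
which follows from independence and zero mean but is \emph{not} what one gets by rescaling a single variable. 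This fourth moment enters $(\zeta^*)'$ and the CLT variance and produces the extra $\SNR^2F^2$ pieces that shift $\frac{\SNR^2}{4}$ to $\frac{\SNR^2}{2}$ once the integration-by-parts identities \eqref{eq:int_part_2} and \eqref{eq:int_part_3} are applied. You should carry this out rather than guess the coefficient.

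Second, your claim that the diagonal ``contributes only an $O(N^{-1})$ correction to the log LR'' with ``$o(1)$ CLT fluctuation'' is false. The diagonal part $\sum_k\log\frac{p(\sqrt N Y_{kk}-\sqrt{\SNR/N})}{p(\sqrt N Y_{kk})}$ is a sum of $N$ independent terms each of order $N^{-1/2}$, with total mean $-\frac{\SNR F}{2}+o(1)$ and variance $\SNR F+o(1)$; in the limit it converges to $\caN(-\frac{\SNR F}{2},\SNR F)$, exactly as in \eqref{eq:diagonal_Taylor} with $F_d=F$. So the diagonal is \emph{not} negligible. The correct reason the linear term disappears from $\rho^*$ is a cancellation: in $\rho$ the linear term $\SNR(F-2F_d)$ combines the off-diagonal contribution $\SNR F$ with the diagonal contribution $-2\SNR F_d$; symmetrization doubles the off-diagonal piece to $2\SNR F$ but leaves the diagonal piece at $-2\SNR F_d$, and since $F_d=F$ in the IID model they cancel exactly, $2\SNR F - 2\SNR F = 0$. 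If you discard the diagonal CLT you will get the wrong mean and variance.

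With these two corrections your reduction matches the paper's proof in Section~\ref{sec:asymmetric} and Appendix~\ref{sec:iid_proof}.
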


See Appendix \ref{sec:iid_proof} for the proof of Theorem \ref{thm:iid}. From Theorem \ref{thm:iid}, we can obtain the following estimate for the error probability of the LR test.

\begin{cor} \label{cor:iid}
	Suppose that the assumptions of Theorem \ref{thm:iid} hold. Then, the error probability of the likelihood ratio test 
\[
	\mathrm{err}(\SNR) := \p( \mathrm{Reject}\, \bsH_0 | \bsH_0 ) + \p( \mathrm{Do} \,\,  \mathrm{not} \,\,  \mathrm{reject}\, \bsH_0 | \bsH_1 )
\]
converges to
\[
	\mathrm{erfc} \left( \frac{\sqrt{\rho^*}}{2} \right) = \frac{2}{\sqrt{\pi}} \int_{\frac{\sqrt{\rho^*}}{2}}^{\infty} e^{-x^2} \dd x
\]
as $N \to \infty$, where $\rho^*$ is as defined in \eqref{eq:rho_star}.
\end{cor}

We omit the proof of Corollary \ref{cor:iid}; see the proof of Theorem 2 of \cite{chung2019weak}.

Comparing Theorem \ref{thm:iid} with Theorem \ref{thm:main}, we find that the effective SNR is $2\SNR F$, which is doubled from the effective $\SNR F$ for the spiked Wigner matrices. The doubling of the effective SNR can also be heuristically checked from the LR as follows. By definition,
\beq \begin{split}
	\caL(Y;\SNR) = \frac{1}{2^N} \sum_{\bsx} \prod_{i \neq j} \frac{p(\sqrt{N} Y_{ij}-\sqrt{\SNR N} x_i x_j)}{p(\sqrt{N} Y_{ij})} \prod_k \frac{p(\sqrt{N} X_{kk}-\sqrt{\SNR N} x_k^2)}{p(\sqrt{N} X_{kk})}\,,
\end{split} \eeq
The off-diagonal term can be further decomposed into the upper-diagonal term and the lower-diagonal term, i.e.,
\[
	\prod_{i \neq j} \frac{p(\sqrt{N} Y_{ij}-\sqrt{\SNR N} x_i x_j)}{p(\sqrt{N} Y_{ij})} = \prod_{i < j} \frac{p(\sqrt{N} Y_{ij}-\sqrt{\SNR N} x_i x_j)}{p(\sqrt{N} Y_{ij})} \prod_{i > j} \frac{p(\sqrt{N} Y_{ij}-\sqrt{\SNR N} x_i x_j)}{p(\sqrt{N} Y_{ij})}.
\]
The upper-diagonal term and the lower-diagonal term are almost independent, which effectively makes the LR becomes squared and the log-LR doubled. However, these terms are not exactly independent due to the spike $x_i x_j$, and this is why the quadratic term $(\SNR^2/2) (2F^2 - G)$ in $\rho^*$ is not the same as the four times the quadratic term $(\SNR^2/4) (2F^2 - G)$, which is what one would get by changing $\SNR$ to $2\SNR$ in \eqref{eq:rho}. Note that the linear term $\SNR (F-2F_d)$ vanishes in \eqref{eq:rho_star} due to the assumption that $F_d = F$, since the term $\SNR F$, arising from the off-diagonal terms, gets doubled for the spiked IID matrices but $2\SNR F_d$ does not as it originates from the diagonal terms.

Theorem \ref{thm:iid} implies the impossibility of the reliable detection when $\lambda < 1/(2F)$. To conclude that the threshold $\lambda_c = 1/(2F)$ for the reliable detection, we can consider the following variant of the PCA. We first apply the function $q$ entrywise to the data matrix as in \eqref{eq:q}. After normalizing the transformed matrix by dividing it by $\sqrt{N}$, it is approximately a spiked IID matrix. In the next step, we symmetrize the (normalized) transformed matrix, i.e., we compute
\[
	\wt Y_{ij} = \frac{q(\sqrt{N} Y_{ij}) + q(\sqrt{N} Y_{ji})}{\sqrt{2N}}.
\] 
From the Taylor expansion, we find that
\[ \begin{split}
	\frac{q(\sqrt{N} Y_{ij}) + q(\sqrt{N} Y_{ji})}{\sqrt{2N}} \approx \frac{q(\sqrt{N} X_{ij}) + q(\sqrt{N} X_{ji})}{\sqrt{2N}} + \sqrt{2\SNR} \E[q'(\sqrt{N} X_{ij})] x_i x_j,
\end{split} \]
which is approximately a spiked Wigner matrix. Following the proof in \cite{Perry2018}, it is then immediate to prove that the reliable detection is possible by applying the PCA to $\wt Y$ if $\lambda_c > 1/(2F)$.

\section{Examples and numerical experiments} \label{sec:ex}

In this section, we consider a spiked Wigner model with non-Gaussian noise and conduct numerical experiments to compare the error from the simulation with the theoretical error in Corollary \ref{cor:main}.

Suppose that the density of the noise matrix is
\[
	p(x) = p_d(x) = \frac{\sech(\pi x/2)}{2} = \frac{1}{e^{\pi x /2} + e^{-\pi x/2}}.
\]
We sample $W_{ij} (i \leq j)$ from the density $p$ and let $H_{ij} = W_{ij}/\sqrt{N}$. We also sample the spike $\bsx = (x_1, x_2, \dots, x_N)$ so that $\sqrt{N} x_i$'s are i.i.d. Rademacher. The data matrix $M=H+\sqrt{\lambda} \bsx \bsx^T$. From direct calculation, it is straightforward to see that the constants in Theorem \ref{thm:main} are
\[
	F = F_d = \frac{\pi^2}{8}, \quad G = \frac{\pi^4}{4}.
\]
Assume that $\SNR < 1/F = 8/\pi^2$. By Corollary \ref{cor:main}, the limiting error of the LR test is
\beq \label{eq:error_uni}
	\mathrm{erfc} \left( \frac{1}{4} \sqrt{-\log \big( 1- \frac{\pi^2 \SNR}{8} \big) + \frac{\pi^2 \SNR}{8} + \frac{7\pi^4 \SNR^2}{128}} \right).
\eeq

We perform a numerical experiment with $500$ samples of the $32 \times 32$ matrix $M$ under $\bsH_0 (\lambda = 0)$ and $\bsH_1 (\lambda = \SNR)$, respectively, varying the SNR $\SNR$ from $0$ to $0.5$. Since finding the exact log-LR in \eqref{eq:non-Gaussian_LR} for each sample is not viable computationally (as it requires to compute the average of the likelihood ratios with $2^{32}$ different spikes), we apply a Monte Carlo method as follows: for each $M$, we compute
\beq
	\caL^{(\ell)} := \prod_{i \leq j} \frac{p(\sqrt{N} M_{ij}-\sqrt{\SNR N} x^{(\ell)}_i x^{(\ell)}_j)}{p(\sqrt{N} M_{ij})} = \frac{\sech \big( (\pi\sqrt{N} M_{ij}- \pi\sqrt{\SNR N} x^{(\ell)}_i x^{(\ell)}_j )/2\big)}{\sech(\pi\sqrt{N} M_{ij}/2)},
\eeq
for $1 \leq \ell \leq 10^5$, where $\sqrt{N} x^{(\ell)}_i$'s are i.i.d. Rademacher, independent of $\bsx$. We use the test statistic $\ol\caL$, which is the average of $\caL^{(\ell)}$'s, i.e.,
\beq \label{eq:MC_LR}
	\ol\caL := \frac{1}{10^5} \sum_{\ell=1}^{10^5} \caL^{(\ell)}.
\eeq
The behavior of the test statistic $\ol\caL$ under $\bsH_0$ and $\bsH_1$ for the SNR $\SNR=0.3$ and $\SNR=0.4$ can be found in Figure \ref{fig:histogram}. 

\begin{figure}[t]
\centering
    \includegraphics[width=0.9\linewidth]{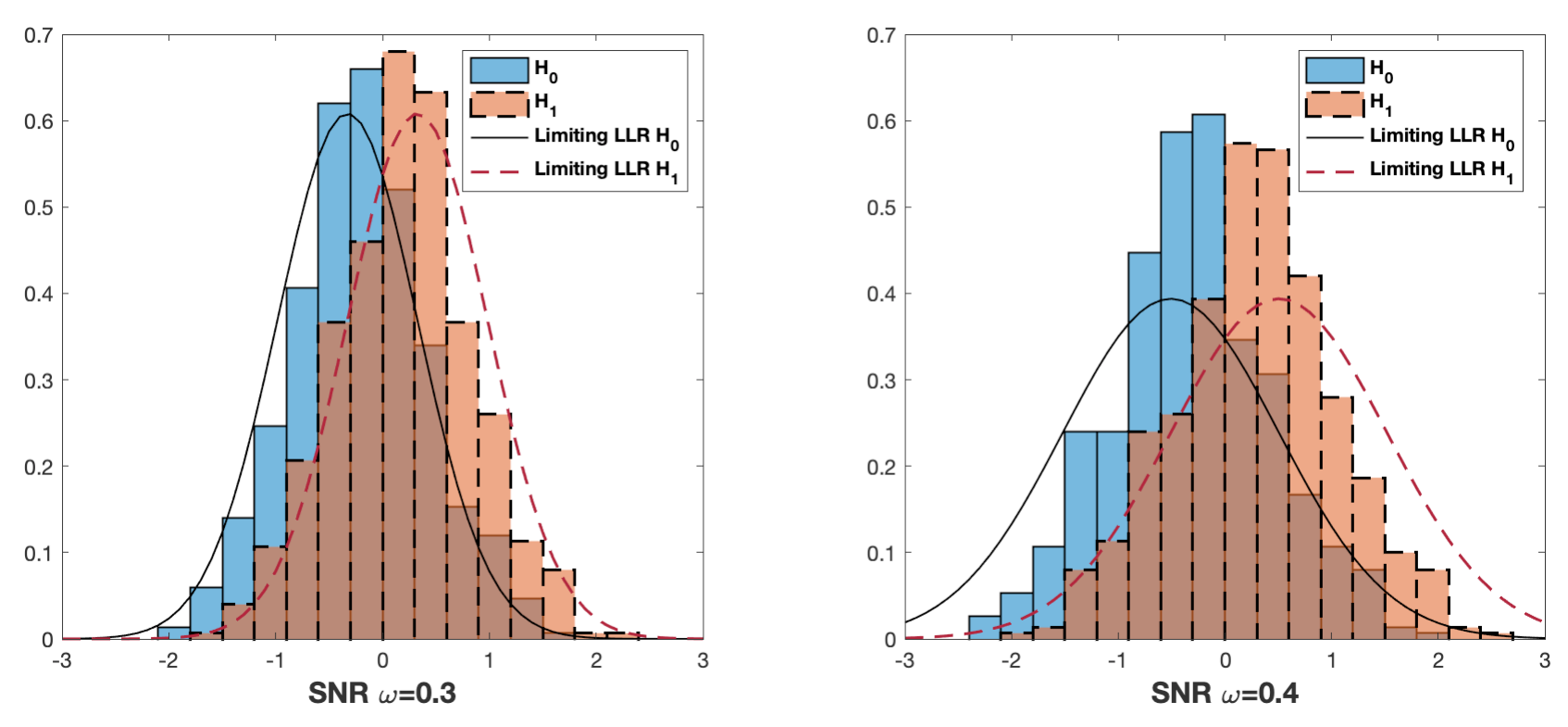}
    \caption{The histogram of the log likelihood ratios under $H_0$ and $H_1$, respectively, for the SNR $\SNR=0.3$ and $\SNR=0.4$ after 500 runs. The limiting log likelihood ratios are also plotted. }
  \label{fig:histogram}
\end{figure}

To compare the error from the numerical experiment with the theoretical error, we consider the test in which we accept $\bsH_0$ if $\ol\caL \leq 1$ and reject $\bsH_1$ otherwise. The result can be found in Figure \ref{fig:main_inf_real}.(a), which shows that the empirical LLR error closely matches the theoretical limiting error in \eqref{eq:error_uni}.

In the algorithm proposed in \cite{chung2019weak}, the data matrix was pre-transformed by
\[
	\wt M_{ij} = \sqrt{\frac{2}{N}} \tanh \left( \frac{\pi \sqrt{N}}{2} M_{ij} \right).
\]
The test statistic based on the LSS of $\wt M$ was used for the hypothesis test, which is given by
\[ \begin{split}
	\wt L_{\SNR} = -\log \det \left( (1+\frac{\pi^2 \SNR}{8})I - \sqrt{\frac{\pi^2 \SNR}{8}} \tM \right) + \frac{\pi^2 \SNR N}{16} 
	+ \frac{\pi \sqrt{\SNR}}{2 \sqrt{2}} \Tr \tM + \frac{\pi^2 \SNR}{16} (\Tr \tM^2 - N).
\end{split} \]
We accept $\bsH_0$ if 
\[
	\wt L_{\SNR} \leq -\log \left(1-\frac{\pi^2 \SNR}{8} \right) - \frac{3 \pi^4 \SNR^2}{512}
\]
and reject $\bsH_0$ otherwise. The limiting error of this test is
\beq \label{eq:error_LSS}
	\mathrm{erfc} \left( \frac{1}{4} \sqrt{-\log \big( 1- \frac{\pi^2 \SNR}{8} \big) + \frac{\pi^2 \SNR}{8}} \right).
\eeq
Note that the error of this algorithm is larger than that of the LR test in \eqref{eq:error_uni}, as $\mathrm{erfc}(z)$ is a decreasing function of $z$. See also Figure \ref{fig:main_inf_real}.(a). We remark that the difference between the empirical error and the limiting error increases as the SNR $\SNR$ increases, and it is possibly due to the finite size effect, i.e., the contribution from the small terms that were ignored in the asymptotic expansions.

\begin{figure}[tb]
\centering
\begin{subfigure}{0.49\textwidth}
\centering
    \includegraphics[width=0.8\linewidth]{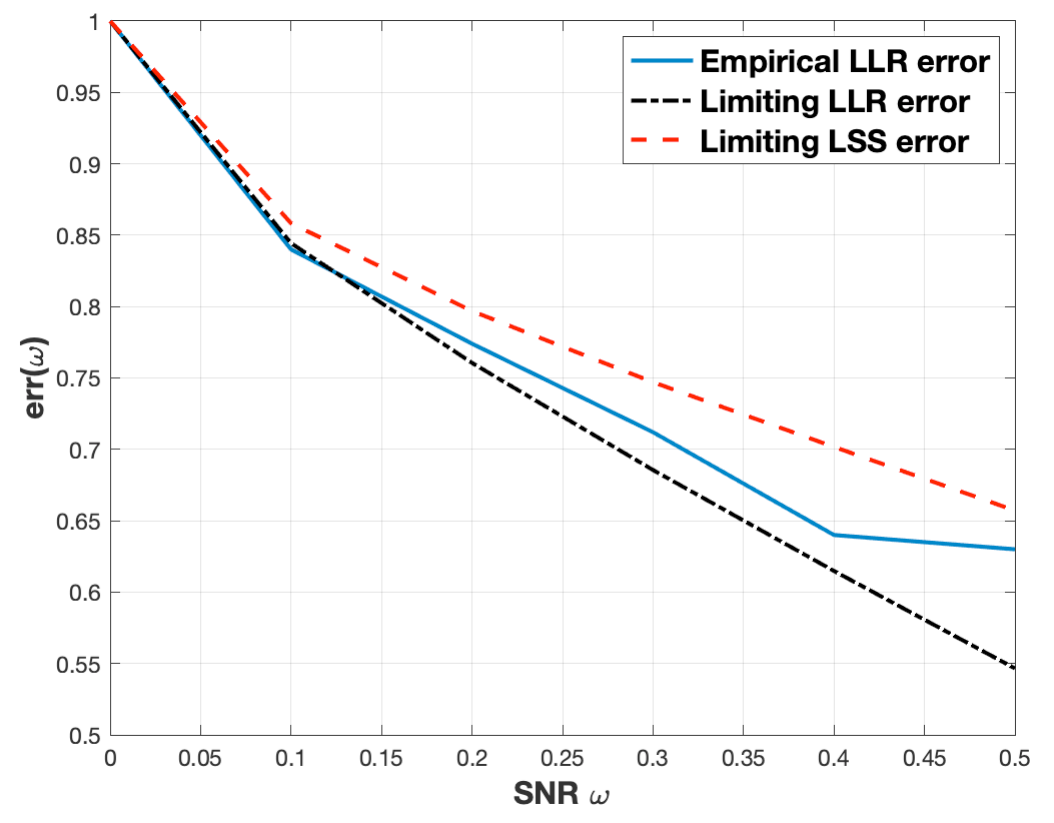}
    \caption{Rademacher prior}
  \label{fig:rademacher}
\end{subfigure}
\begin{subfigure}{0.49\textwidth}
\centering
    \includegraphics[width=0.8\linewidth]{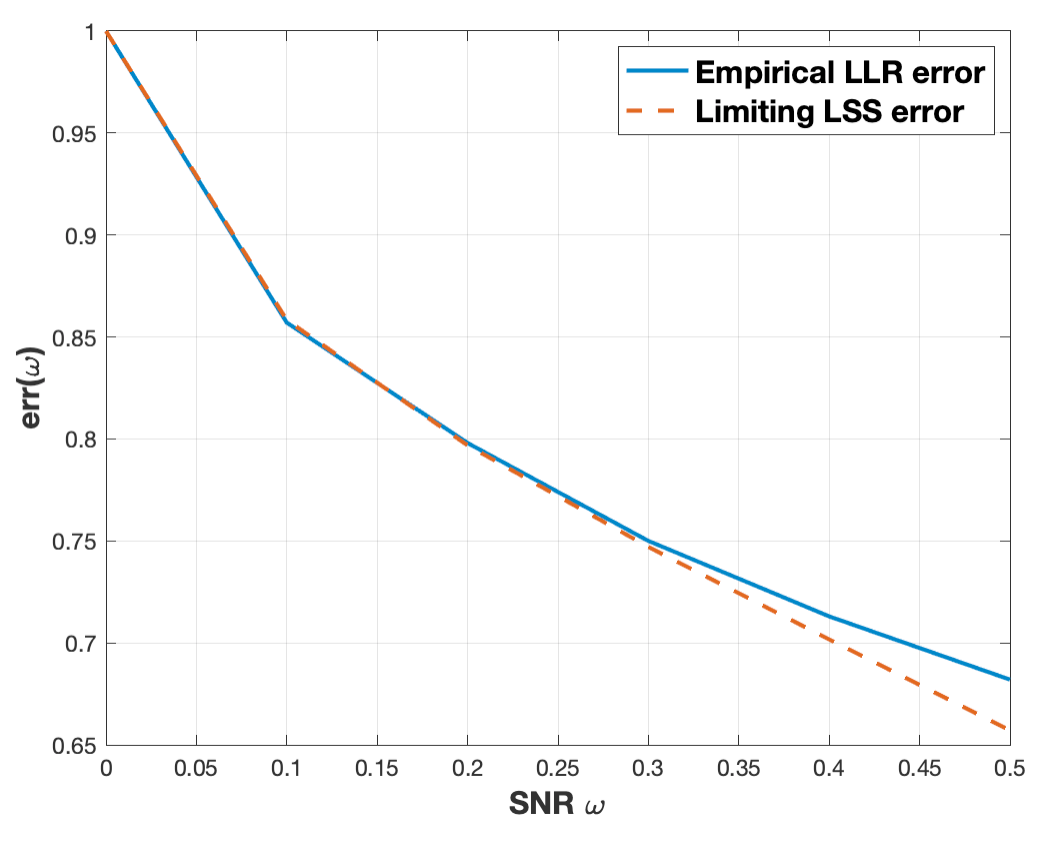}
    \caption{Spherical prior}
  \label{fig:spherical}
\end{subfigure}\\
\caption{The empirical LLR errors (blue solid curves) with the varying SNR $\SNR$. (a) The empirical LLR error with Rademacher prior is compared with its limiting error of LLR (black dash-dot curve) and that of linear spectral statistics (LSS) (red dashed curve). The empirical LLR error closely matches its limiting error and is lower than the limiting LSS error. (b) The empirical LLR error with spherical prior is compared with the limiting error of LSS. The empirical LLR error with spherical prior closely matches the limiting LSS error. }
\label{fig:main_inf_real}
\end{figure}

To numerically test the conjecture introduced in the last paragraph of Section \ref{subsec:main}, we perform a simulation with the spherical prior instead of the Rademacher prior. More precisely, we use the same density for the noise matrix, while we sample the spike by first sampling $\wt x_i$ to be i.i.d. standard Gaussians and then normalizing it by
\[
	x_i = \frac{\wt x_i}{\sqrt{\wt x_1^2 + \wt x_2^2 + \dots + \wt x_N^2}},
\]
which lets the spike $\bsx = (x_1, x_2, \dots, x_N)$ be distributed uniformly on the unit sphere. 
In the numerical experiment with the spherical prior, we again generated $500$ samples of the $32 \times 32$ matrix $M$ under $\bsH_0 (\lambda = 0)$ and $\bsH_1 (\lambda = \SNR)$, respectively, varying the SNR $\SNR$ from $0$ to $0.5$. For the LR test, we used the test statistic in \eqref{eq:MC_LR} for which the random sample spike $\bsx^{(\ell)} = (x^{(\ell)}_1, x^{(\ell)}_2, \dots, x^{(\ell)}_N)$ is drawn uniformly on the unit sphere. The numerical error of the LR test can be found in Figure \ref{fig:main_inf_real}.(b), which shows that the empirical LLR error closely matches the theoretical limiting error of the LSS test in \eqref{eq:error_LSS}. From the numerical experiment, we can check that the error from the LR test depends on the prior, and it also suggests that the LSS-based test is optimal in the sense that it minimizes the sum of the Type-I error and the Type-II error.

\section{Proof of main theorem} \label{sec:proof}

In this section, we provide the detailed proof of Theorem \ref{thm:main}. We use the following shorthand notations for the proof.

\begin{nrem}
	For $X$ and $Y$, which can be deterministic numbers and/or random variables depending on $N$, we use the notation $X = \caO(Y)$ if for any (small) $\varepsilon > 0$ and (large) $D > 0$ there exists $N_0 \equiv N_0 (\varepsilon, D)$ such that $\p(|X|>N^{\varepsilon} |Y|) < N^{-D}$ whenever $N > N_0$.
	  For an event $\Omega$, we say that $\Omega$ holds with high probability if for any (large) $D > 0$ there exists $N_0 \equiv N_0 (D)$ such that $\p(\Omega^c) < N^{-D}$ whenever $N > N_0$.
		For a sequence of random variables, the notation $\Rightarrow$ denotes the convergence in distribution as $N\rightarrow\infty.$ 
\end{nrem}

The following lemma will be frequently used in the proof of Theorem \ref{thm:main}.

\begin{lemma} \label{lem:stochastic_bound}
For any $1 \leq i, j \leq N$ and for any positive integer $s$,
\[
	H_{ij} = \caO(N^{-\frac{1}{2}}), \quad \frac{p^{(s)}(\sqrt{N}H_{ij})}{p(\sqrt{N}H_{ij})}, \frac{p_d^{(s)}(\sqrt{N}H_{ii})}{p_d(\sqrt{N}H_{ii})} = \caO(1).
\]
Moreover, if we define
\[
	h(x) := -\frac{p'(x)}{p(x)}, \quad h_d(x) = -\frac{p_d'(x)}{p_d(x)},
\]
then
\[
	h^{(s)}(\sqrt{N}H_{ij}), h_d^{(s)}(\sqrt{N}H_{ii}) = \caO(1).
\]
\end{lemma}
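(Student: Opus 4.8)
The plan is to derive all three estimates from Markov's inequality combined with the finite moment condition in Definition \ref{defn:wigner} and the polynomial-growth assumption in Assumption \ref{assump:wigner}. First I would prove $H_{ij} = \caO(N^{-1/2})$. Fix $\varepsilon, D > 0$. By the finite moment condition, for every even integer $k$ we have $\E[H_{ij}^k] \le C_k N^{-k/2}$, so Markov's inequality gives
\[
\p\big(|H_{ij}| > N^{\varepsilon - 1/2}\big) \le \frac{\E[H_{ij}^k]}{N^{k(\varepsilon - 1/2)}} \le C_k N^{-k\varepsilon}.
\]
Choosing $k$ to be any even integer with $k\varepsilon > D$ makes the right-hand side smaller than $N^{-D}$ for all large $N$, which is exactly the claim; a union bound over the at most $N^2$ pairs $(i,j)$, at the cost of replacing $D$ by $D+2$, gives it simultaneously for all entries.

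Next, for $p^{(s)}(\sqrt{N}H_{ij})/p(\sqrt{N}H_{ij}) = \caO(1)$ (and the analogous statement for $p_d$ and the diagonal entries), I would work on the high-probability event $\{|H_{ij}| \le N^{\varepsilon' - 1/2}\}$ from the previous step, on which $|\sqrt{N}H_{ij}| \le N^{\varepsilon'}$. The polynomial bound $|p^{(s)}(x)/p(x)| \le C_s |x|^{r_s}$ then yields
\[
\left| \frac{p^{(s)}(\sqrt{N}H_{ij})}{p(\sqrt{N}H_{ij})} \right| \le C_s N^{r_s/2}\,|H_{ij}|^{r_s} \le C_s N^{r_s \varepsilon'}.
\]
Taking $\varepsilon' = \varepsilon/(2 r_s)$ makes the right-hand side at most $N^{\varepsilon}$ for large $N$, and the exceptional set has probability $< N^{-D}$, so the $\caO(1)$ bound follows.

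Finally, for $h^{(s)}$ and $h_d^{(s)}$, I would use that $h = -p'/p = -(\log p)'$, hence $h^{(s)} = -(\log p)^{(s+1)}$, and that the higher logarithmic derivative $(\log p)^{(n)}$ is a universal polynomial with integer coefficients, independent of $N$, in the normalized derivatives $p'/p, p''/p, \dots, p^{(n)}/p$ (for instance $(\log p)'' = p''/p - (p'/p)^2$, the general case following by an immediate induction). Since each $p^{(k)}(\sqrt{N}H_{ij})/p(\sqrt{N}H_{ij})$ with $1 \le k \le s+1$ is $\caO(1)$ by the previous paragraph, and the family of $\caO(1)$ quantities is closed under finite sums and products (intersect the finitely many high-probability events and split the $\varepsilon$-budget among the factors), we conclude $h^{(s)}(\sqrt{N}H_{ij}) = \caO(1)$; the same argument applies verbatim to $h_d^{(s)}(\sqrt{N}H_{ii})$.

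I do not expect a genuine obstacle here: the statement is a routine combination of Markov's inequality with the model hypotheses. The only points that merit an explicit line are the closure of the $\caO(\cdot)$ notation under finite sums and products and the expression of $(\log p)^{(n)}$ in terms of the ratios $p^{(k)}/p$; both are elementary.
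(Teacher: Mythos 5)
Your proof is correct and follows essentially the same route as the paper's: Markov's inequality with the finite-moment condition for $H_{ij}=\caO(N^{-1/2})$, the polynomial-growth hypothesis to transfer this to $p^{(s)}/p$, and the observation that $h^{(s)}$ is a polynomial in the ratios $p^{(k)}/p$ for the last claim. Your version is slightly more explicit on two points the paper leaves implicit (the structure of higher logarithmic derivatives via $h^{(s)}=-(\log p)^{(s+1)}$, and the closure of $\caO(\cdot)$ under finite sums and products), but the substance is identical.
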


Lemma \ref{lem:stochastic_bound} can be proved by applying standard arguments using Markov's inequality and the finite moment condition in Definition \ref{defn:wigner}. See Appendix \ref{sec:lemma_proof} for the detailed proof.

From Lemma \ref{lem:stochastic_bound}, we find that for any given $\epsilon>0$, $\sqrt{N} H_{ij}$, $p^{(s)}(\sqrt{N}H_{ij})/p(\sqrt{N}H_{ij})$, and $p_d^{(s)}(\sqrt{N}H_{ii})/p_d(\sqrt{N}H_{ii})$ are uniformly bounded by $N^{\epsilon}$ with high probability. Thus, we have the following estimate for the i.i.d. sums of these random variables.

\begin{lemma} \label{lem:LLN}
Define
\beq
	P^{(s)}_{ij} := \frac{p^{(s)}(\sqrt{N} H_{ij})}{p(\sqrt{N} H_{ij})}, \quad (P_d^{(s)})_{ii} := \frac{p_d^{(s)}(\sqrt{N} H_{ii})}{p_d(\sqrt{N} H_{ii})}.
\eeq
For any $s_1, s_2, \dots, s_k \geq 1$,
\beq \begin{split}
	\frac{2}{N(N-1)} \sum_{i<j} P^{(s_1)}_{ij} P^{(s_2)}_{ij} \dots P^{(s_k)}_{ij} &= \E[P^{(s_1)}_{12} P^{(s_2)}_{12} \dots P^{(s_k)}_{12}] + \caO(N^{-1}), \\
	\frac{1}{N} \sum_{i=1}^N (P_d^{(s_1)})_{ii} (P_d^{(s_2)})_{ii} \dots (P_d^{(s_k)})_{ii} &= \E[(P_d^{(s_1)})_{11} (P_d^{(s_2)})_{11} \dots (P_d^{(s_k)})_{11}] + \caO(N^{-\frac{1}{2}}).
\end{split} \eeq
\end{lemma}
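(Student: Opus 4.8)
The plan is to observe that, under $\bsH_0$, each of the two sums is an empirical average of independent, identically distributed random variables whose common law does not depend on $N$, and then to upgrade the law of large numbers to the stated rate via a high-moment Markov estimate.

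First I would treat the off-diagonal case. Under $\bsH_0$ we have $M = H$, so the normalized entries $\sqrt{N}M_{ij} = \sqrt{N}H_{ij}$ with $i<j$ are i.i.d.\ with density $p$; hence, setting $Z_{ij} := P^{(s_1)}_{ij}P^{(s_2)}_{ij}\cdots P^{(s_k)}_{ij}$ (a function of the single variable $\sqrt{N}H_{ij}$), the family $\{Z_{ij}\}_{i<j}$ is i.i.d., and $Z_{12}$ has the law of $\prod_{\ell=1}^{k}\big(p^{(s_\ell)}/p\big)(\xi)$ for $\xi$ with density $p$ --- a law independent of $N$. The one input I need is that $Z_{12}$ has finite absolute moments of all orders: the polynomial growth bound of Assumption \ref{assump:wigner} gives $|Z_{ij}| \le C|\sqrt{N}H_{ij}|^{r}$ for some $r = r(s_1,\dots,s_k)$, while the finite moment condition of Definition \ref{defn:wigner} makes all moments of $\sqrt{N}H_{ij}$ (equivalently, of $p$) bounded uniformly in $N$. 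In particular $\mu := \E[Z_{12}]$ is finite and equals the constant $\E[P^{(s_1)}_{12}\cdots P^{(s_k)}_{12}]$ on the right-hand side, and $\kappa_q := \E[|Z_{12}-\mu|^{2q}] < \infty$ for every $q$.

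Next I would set $m := \binom{N}{2}$ and $S := \sum_{i<j}(Z_{ij}-\mu)$, so the first claim is precisely $S/m = \caO(N^{-1})$, and estimate the even moments of $S$ by the standard argument for sums of centered independent variables: expanding $\E[S^{2q}] = \sum\E\big[\prod_{t=1}^{2q}(Z_{\alpha_t}-\mu)\big]$ over $2q$-tuples of pairs, any term in which some pair occurs exactly once vanishes by independence and centering, so only tuples using at most $q$ distinct pairs survive; there are at most $C_q m^q$ such tuples, and by H\"older each contributes at most $\kappa_q$ in modulus, so $\E[S^{2q}] \le C_q\kappa_q\,m^q$. Markov's inequality then yields, for any $\delta > 0$,
\[
  \p\big(|S| > m^{1/2+\delta}\big) \;\le\; \frac{\E[S^{2q}]}{m^{q(1+2\delta)}} \;\le\; \frac{C_q\kappa_q}{m^{2q\delta}},
\]
which is $< N^{-D}$ once $q$ is chosen large enough in terms of $\delta$ and $D$ (recall $m \asymp N^2$). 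Since on the complementary event $|S/m| \le m^{-1/2+\delta} \le N^{-1+3\delta}$ for large $N$, and $\delta > 0$ is arbitrary, this is exactly $\frac{2}{N(N-1)}\sum_{i<j}Z_{ij} = \mu + \caO(N^{-1})$. The diagonal estimate is the identical computation with $m$ replaced by $N$: the variables $(P_d^{(s_1)})_{ii}\cdots(P_d^{(s_k)})_{ii}$ are i.i.d.\ with finite moments of all orders (polynomial bound for $p_d$ plus finite moments of $\sqrt{N}H_{ii}$) and mean $\E[(P_d^{(s_1)})_{11}\cdots(P_d^{(s_k)})_{11}]$; the same bound reads $\E[S_d^{2q}] \le C_q N^q$ for the centered sum $S_d$, and Markov gives $|S_d| \le N^{1/2+\delta}$ with probability at least $1-N^{-D}$, so dividing by $N$ produces the error $\caO(N^{-1/2})$.

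There is no real obstacle here; the only point needing care is that the $P^{(s)}_{ij}$ are \emph{not} uniformly bounded, so Hoeffding's and Bernstein's inequalities do not apply off the shelf --- which is exactly why I route through the polynomial growth bound of Assumption \ref{assump:wigner} to secure the finite constants $\kappa_q$ before running the moment method. Alternatively, one could condition on the high-probability event of Lemma \ref{lem:stochastic_bound} on which all $|P^{(s)}_{ij}| \le N^{\varepsilon}$, apply Bernstein's inequality to the truncated variables, and bound the truncation bias --- the contribution of $\{|Z_{12}| > N^{\varepsilon}\}$ to $\E|Z_{12}-\mu|$ --- by a high moment of $Z_{12}$ via Cauchy--Schwarz and Markov, obtaining a super-polynomially small correction; either route gives the claim.
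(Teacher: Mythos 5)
Your argument is correct, and it proves the lemma by a route that differs from the paper's. You run the unconditional moment method: you bound $\E[S^{2q}]\le C_q\kappa_q m^q$ for the centered sum $S$ directly, using only independence, centering, and the finiteness of $\kappa_q=\E\lvert Z_{12}-\mu\rvert^{2q}$ (which you correctly extract from the polynomial bound on $p^{(s)}/p$ together with the finite-moment condition), and then apply Markov with $q$ large. The paper instead truncates first: it conditions on the high-probability event $\Omega=\bigcap_{i,j}\{\lvert P_{ij}^{(s)}\rvert<N^{\epsilon}\}$, on which the summands are bounded so that Hoeffding (or a high-order Markov bound) applies to give concentration around the \emph{conditional} mean $\E[P_{ij}^{(s)}\mid\Omega_{ij}]$, and then separately bounds the truncation bias $\E[P_{ij}^{(s)}]-\E[P_{ij}^{(s)}\mid\Omega_{ij}]$ via Cauchy--Schwarz and the smallness of $\p(\Omega_{ij}^c)$. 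You flag exactly this alternative at the end, so you have in effect described both proofs. The trade-off is minor: your direct moment method is more self-contained (one step rather than concentration plus bias control) and has the virtue that the $N$-independence of the common law of $Z_{12}$ makes the constants $\kappa_q$ genuinely $N$-free, whereas the paper's truncation route localizes the heavy-tail difficulty into a single Cauchy--Schwarz estimate and is the more mechanical pattern to reuse elsewhere in the paper (e.g., it is the same template as Lemma \ref{lem:stochastic_bound}). Both require the same hypotheses --- polynomial growth of $p^{(s)}/p$ and finite moments of all orders --- so neither is strictly more general. One small presentational point: the paper only writes out the $k=1$, off-diagonal case and says the rest is similar; your proof is uniform in $k$ from the outset, which is slightly cleaner.
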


See Appendix \ref{sec:lemma_proof} for the detailed proof of Lemma \ref{lem:LLN}.

\subsection{Decomposition of LR} \label{subsec:decomp}

Assume $\bsH_0$. Recall that
\beq \label{eq:LR_formula}
	\caL(M;\SNR) = \frac{1}{2^N} \sum_{\bsx} \prod_{i<j} \frac{p(\sqrt{N} M_{ij}-\sqrt{\SNR N} x_i x_j)}{p(\sqrt{N} M_{ij})} \prod_k \frac{p_d(\sqrt{N} M_{kk}-\sqrt{\SNR N} x_k^2)}{p_d(\sqrt{N} M_{kk})}\,.
\eeq
We first focus on the off-diagonal part of \eqref{eq:LR_formula} and assume $i<j$. Recall the definition of $P^{(s)}_{ij}$ in Lemma \ref{lem:LLN}. Note that $P^{(s)}_{ij}$ is an odd function of $\sqrt{N} M_{ij}$ if $s$ is odd, whereas it is an even function if $s$ is even. Recall that $x_i^2 = N^{-1}$. Since $P^{(s)}_{ij} = \caO(1)$ from Lemma \ref{lem:stochastic_bound}, by the Taylor expansion,
\beq \begin{split}
	\frac{p(\sqrt{N} M_{ij}-\sqrt{\SNR N} x_i x_j)}{p(\sqrt{N} M_{ij})}
	= 1 -\sqrt{\SNR N} P^{(1)}_{ij} x_i x_j + \frac{\SNR P^{(2)}_{ij}}{2N} - \frac{\SNR \sqrt{\SNR}}{6\sqrt{N}} P^{(3)}_{ij} x_i x_j + \frac{\SNR^2 P^{(4)}_{ij}}{24N^2} + \caO(N^{-\frac{5}{2}}),
\end{split} \eeq
uniformly on $i$ and $j$. Taking the logarithm and Taylor expanding it again, we obtain
\[ \begin{split}
	&\log \left( \frac{p(\sqrt{N} M_{ij}-\sqrt{\SNR N} x_i x_j)}{p(\sqrt{N} M_{ij})} \right)\\
	&= -\sqrt{\SNR N} x_i x_j \left( P^{(1)}_{ij} + \frac{\SNR}{6N} \Big( P^{(3)}_{ij} - 3 P^{(1)}_{ij} P^{(2)}_{ij} + 2 (P^{(1)}_{ij})^3 \Big) \right) + \frac{\SNR}{2N} \left( P^{(2)}_{ij} - (P^{(1)}_{ij})^2 \right) \\
	&\qquad + \frac{\SNR^2}{24N^2} \left( P^{(4)}_{ij} -3 (P^{(2)}_{ij})^2 - 4P^{(1)}_{ij} P^{(3)}_{ij} + 12(P^{(1)}_{ij})^2 P^{(2)}_{ij} -6(P^{(1)}_{ij})^4 \right) + \caO(N^{-\frac{5}{2}}).
\end{split} \]

We define $N \times N$ matrices $A$, $B$, $C$ by
\beq \begin{split} \label{eq:ABC}
	& A_{ij}:= -\sqrt{\SNR N} \left( P^{(1)}_{ij} + \frac{\SNR}{6N} \Big( P^{(3)}_{ij} - 3 P^{(1)}_{ij} P^{(2)}_{ij} + 2 (P^{(1)}_{ij})^3 \Big) \right), \quad B_{ij}:= \frac{\SNR}{2N} \left( P^{(2)}_{ij} - (P^{(1)}_{ij})^2 \right), \\
	& C_{ij}:= \frac{\SNR^2}{24N^2} \left( P^{(4)}_{ij} -3 (P^{(2)}_{ij})^2 - 4P^{(1)}_{ij} P^{(3)}_{ij} + 12(P^{(1)}_{ij})^2 P^{(2)}_{ij} -6(P^{(1)}_{ij})^4 \right).
\end{split} \eeq
Note that from Lemma \ref{lem:stochastic_bound}, $A_{ij} = \caO(\sqrt{N})$, $B_{ij} = \caO(N^{-1})$, $C_{ij} = \caO(N^{-2})$.
We then have
\beq \begin{split} \label{eq:decompose}
	&\log \frac{1}{2^N} \sum_{\bsx} \prod_{i<j} \frac{p(\sqrt{N} M_{ij}-\sqrt{\SNR N} x_i x_j)}{p(\sqrt{N} M_{ij})} \\
	&= \log \frac{1}{2^N} \sum_{\bsx} \exp \left( \sum_{i<j} \log \left( \frac{p(\sqrt{N} M_{ij}-\sqrt{\SNR N} x_i x_j)}{p(\sqrt{N} M_{ij})} \right) \right) \\
	&= \log \frac{1}{2^N} \sum_{\bsx} \exp \left( \sum_{i<j} (A_{ij} x_i x_j + B_{ij} + C_{ij}) + \caO(N^{-\frac{1}{2}}) \right) \\
	&= \log \frac{1}{2^N} \sum_{\bsx} \exp \left( \sum_{i<j} A_{ij} x_i x_j + \sum_{i<j} (B_{ij}+C_{ij}) + \caO(N^{-\frac{1}{2}}) \right).
\end{split} \eeq

As briefly explained in Section \ref{subsec:intro_Wigner}, we first analyze the term involving $A_{ij}$, which we call the spin glass term, by applying the strategy of \cite{ALR87} conditional on $B$. We can then verify the part dependent of $B$, which is $\zeta'$ in Proposition \ref{prop:Z_limit}. The sum of $\zeta'$ and the term involving $B$ in \eqref{eq:decompose}, which we call the CLT term, can be written as the sum of i.i.d. random variables that depend only on $B$, and we apply the central limit theorem to prove its convergence. Since $C_{ij} = \caO(N^{-2})$, the sum $\sum_{i<j} C_{ij} = \caO(1)$ and its fluctuation is negligible by the law of large numbers; we call the term involving $C$ in \eqref{eq:decompose} the LLN term.

\subsection{Spin glass term} \label{subsec:spin}

We have the following result for the convergence of the term involving $A$. Let $\E_B$ and $\Var_B$ denote the conditional expectation and the conditional variance given $B$, respectively. 
\begin{prop} \label{prop:Z_limit}
Let $A$ be as defined in \eqref{eq:ABC}. Set 
\beq
	Z:= \frac{1}{2^N} \sum_{\bsx} \exp \left( \sum_{i<j} A_{ij} x_i x_j \right).
\eeq
Then, there exist random variables $\zeta$ and $\zeta'$ such that
\beq \label{eq:decomposition_of_Z}
	\log Z = \log \zeta + \zeta' + \caO(N^{-1}),
\eeq
where $\zeta$ and $\zeta'$ satisfy the following:
\begin{enumerate}
\item The conditional distribution of $\log \zeta$ given $B$ converges (in distribution) to $\caN (-\nu, 2\nu)$, where 
\beq \label{eq:nu}
	\nu:= \sum_{k=3}^\infty \frac{(\SNR F)^k}{4k} = -\frac{1}{4} \left( \log (1- \SNR F) + \SNR F + \frac{\SNR^2 F^2}{2} \right).
\eeq

\item Conditional on $B$,
\beq \label{eq:zeta'_prop}
	\zeta' = \frac{1}{2N^2} \sum_{i<j} \E_B[A_{ij}^2] - \frac{\SNR^2}{24} \E[ (P^{(1)}_{12})^4 ] + U,
\eeq
where $U$ is a random variable whose asymptotic law is a centered Gaussian with variance 
\beq \label{eq:theta}
	\theta:= \frac{\SNR^2}{8} \E \left[ \Var_B \big( (P^{(1)}_{12})^2 \big) \right].
\eeq

\item The fluctuations $\log \zeta$ and $\zeta'$ are asymptotically orthogonal to each other under $L_B^2$, the conditional $L^2$-norm given $B$.

\end{enumerate}
\end{prop}

We will prove Proposition \ref{prop:Z_limit} in Section \ref{sec:indep}.

Here, we would like to recall the definition of the conditional distribution. For random variables $X_1$ and $X_2$ with density, let $f_{X_1, X_2}$ be the joint density function of $X_1$ and $X_2$ and $f_{X_1}$ the marginal density function of $X_1$. Then, the conditional distribution of $X_2$ given $X_1$ has the conditional density function $f_{X_2|X_1}$ satisfying the relation
\[
	f_{X_2|X_1}(x_2|x_1) f_{X_1}(x_1) = f_{X_1, X_2}(x_1, x_2).
\]
Note that $f_{X_2|X_1}(x_2|x_1)$ is the function of $x_1$ and $x_2$.

\begin{rem} \label{rem:conditional}
From the first part of Proposition \ref{prop:Z_limit}, we can also find that $\log \zeta$ converges in distribution to $\caN (-\nu, 2\nu)$ without conditioning on $B$. In general, suppose that the conditional distribution of a sequence of random variables $(X_n)$ given $Y$ converges and the limiting distribution is independent of $Y$. Then, $\E[e^{\ii t X_n}|Y] \to \phi(t)$as $n \to \infty$ for some $\phi(t)$ that is independent of $Y$. Since $\E[ e^{\ii t X_n}] = \E[ \E[e^{\ii t X_n}|Y] ]$, we find that $\E[ e^{\ii t X_n}] \to \phi(t)$ and thus $(X_n)$ converges in distribution to the same limiting distribution.
\end{rem}

\begin{rem} \label{rem:center}
Let $\text{sgn}(t)$ be the sign function, defined as $\text{sgn}(t) = 1$ if $t\geq 0$ and $\text{sgn}(t) = -1$ if $t<0$. Then, $\text{sgn}(H_{ij})$ is a random variable such that $\P(\text{sgn}(H_{ij})=1)=\P(\text{sgn}(H_{ij})=-1)= 1/2$. Since $B_{ij}$ is an even function of $\sqrt{N} H_{ij}$ under $\bsH_0$, we can see that $B_{ij}$ and $\text{sgn} (H_{ij})$ are independent. Since $A_{ij}$ is an odd function of $\sqrt{N} H_{ij}$ under $\bsH_0$, this implies that the conditional density function of $A_{ij}$ given $B_{ij}$ is symmetric (about $0$), and in particular, $\E[A_{ij}^s|B_{ij}] = 0$ for any odd $s \in \mathbb{Z}^+$.
\end{rem}

\begin{example}
To better understand the decomposition in Proposition \ref{prop:Z_limit}, we consider the following example that was considered in Section \ref{sec:ex}. Recall that in this example the density of the normalized entries of the noise matrix is
\[
	p(x) = p_d(x) = \frac{\sech(\pi x/2)}{2} = \frac{1}{e^{\pi x /2} + e^{-\pi x/2}}
\]
and it is assumed that $\SNR < 1/F =8/\pi^2$. The derivatives of $p$ are given by
\[ \begin{split}
	p'(x) &=-\frac\pi{2}\tanh(\pi x/2)\cdot p(x), \quad p''(x)=\frac{\pi^2}{4} \left( 1 - 2\sech^2(\pi x/2) \right) p(x), \\
	p'''(x) &=-\frac{\pi^3}{8}\tanh(\pi x/2) \left( 6\sech^2(\pi x/2) -1\right)\cdot p(x).
\end{split} \]
Then, from the definition in \eqref{eq:ABC},
\[
	A_{ij}=-\sqrt{\omega N}\left(-\frac\pi{2}\tanh\Big(\frac{\pi\sqrt{N} M_{ij}}{2}\Big)-\frac{\omega}{6N}\cdot\frac{\pi^3}{4}\tanh\Big(\frac{\pi\sqrt{N} M_{ij}}{2}\Big) \left( 5\sech^2\Big(\frac{\pi\sqrt{N} M_{ij}}{2}\Big) - 1 \right) \right)
\]
and
\[
	B_{ij}=-\frac{\omega}{2N}\cdot\frac{\pi^2}{4}\sech^2\Big(\frac{\pi\sqrt{N} M_{ij}}{2}\Big).
\]
Applying the identity $1 - \tanh^2 x = \sech^2 x$, we thus find that
\[
	A_{ij}=\frac{\sqrt{N}}{2} \sqrt{\SNR\pi^2 + 8NB_{ij}} \left( 1- \frac{10B_{ij}}{3} - \frac{\SNR\pi^2}{12N} \right) \cdot\text{sgn}(M_{ij}).
\]
(See Remark \ref{rem:center} for a definition of $\text{sgn}(M_{ij})$.) Recall that $Z$ depends only on $A_{ij}$. In Section \ref{sec:indep}, under $\bsH_0$, we will see that $\zeta$ depends only on $\tanh (A_{ij}/N)$ whereas $\zeta'$ depends only on $A_{ij}^2$. Due to the independence between $B_{ij}$ and $\text{sgn} (M_{ij})$ discussed in Remark \ref{rem:center}, we find that the conditional distribution of $\log \zeta$ depends only on the random variables $\text{sgn}(M_{ij})$. Thus, Equation \eqref{eq:decomposition_of_Z} in this case means the decomposition of $Z$, conditional on $B$, into one part that depends only on $\text{sgn} (M_{ij})$ and the other part that depends only on $B_{ij}$. It is then obvious that the fluctuations $\log \zeta$ and $\zeta'$ are asymptotically orthogonal to each other under $L_B^2$, due to their independence.
\end{example}

\subsection{CLT part} \label{subsec:CLT}

From Proposition \ref{prop:Z_limit}, we find that the part involving $B$ in the spin glass term is asymptotically $(2N^2)^{-1} \sum_{i<j} \E_B[A_{ij}^2]$. Thus, combining it with the term involving $B$ in \eqref{eq:decompose}, we are led to consider
\beq \label{eq:B_sum}
	\sum_{i<j} B_{ij} + \frac{1}{2N^2} \sum_{i<j} \E[A_{ij}^2|B_{ij}],
\eeq
which is the sum of i.i.d. random variables that depend only on $B_{ij}$. (The contribution from $U$ in \eqref{eq:zeta'_prop} will be considered separately at the end of the proof.) By the central limit theorem, \eqref{eq:B_sum} converges to a normal random variable whose mean and variance we compute in this subsection.

We first compute the mean of \eqref{eq:B_sum}. By definition,
\beq \label{eq:B_ij}
	\E[B_{ij}] = \frac{\SNR}{2N} \E \left[ P^{(2)}_{ij} - (P^{(1)}_{ij})^2 \right].
\eeq
Note that for any $s \geq 1$,
\beq \label{eq:int_part_1}
	\E\left[ P^{(s)}_{ij} \right] = \int_{\R} \frac{p^{(s)}(t)}{p(t)} \, p(t) \, \dd t = \int_{\R}p^{(s)}(t) \, \dd t = 0,
\eeq
and
\[
	F = \int_{\R} \frac{p'(t)^2}{p(t)} \dd t = \E\left[ (P^{(1)}_{ij})^2 \right].
\]
We also have from the definition of $A_{ij}$ and the law of total expectation, $\E \left[ \E[A_{ij}^2|B_{ij}] \right] = \E[A_{ij}^2]$, that
\beq \begin{split} \label{eq:A^2_sum}
	\E \left[ \E[A_{ij}^2|B_{ij}] \right] = \E[A_{ij}^2] 
	= \SNR N \E\left[ (P^{(1)}_{ij})^2 \right] + \frac{\SNR^2}{3} \E\left[ P^{(1)}_{ij} P^{(3)}_{ij} - 3 (P^{(1)}_{ij})^2 P^{(2)}_{ij} + 2 (P^{(1)}_{ij})^4 \right] + O(N^{-1}).
\end{split} \eeq
Using integration by parts, we obtain that
\beq \begin{split} \label{eq:int_part_2}
	\E\left[ (P^{(1)}_{ij})^4 \right] &= \int_{\R} \frac{p'(t)^4}{p(t)^3} \dd t = -\frac{1}{2} \int_{\R} \left( \frac{1}{p(t)^2} \right)' (p'(t)^3) \dd t = \frac{3}{2} \int_{\R} \frac{p'(t)^2 p''(t)}{p(t)^2} \dd t = \frac{3}{2} \E\left[ (P^{(1)}_{ij})^2 P^{(2)}_{ij} \right].
\end{split} \eeq
Note that the boundary term in the integration by parts used in \eqref{eq:int_part_2} vanishes since $p'(t^{\pm}_n)^3/p(t^{\pm}_n)^2 \to 0$ for some sequences $(t^+_n)_{n=1, 2, \dots}$ and $(t^-_n)_{n=1, 2, \dots}$ such that $t^+_n \to \infty$ and $t^-_n \to -\infty$, respectively, which can be checked from the fact that
\[
	\int_{\R} \left| \frac{p'(t)^3}{p(t)^2} \right| \leq C_3 \int_{\R} |t|^{r_3} p(t) \dd t = C_3 \E[ (\sqrt N H_{ij})^{r_3} ] < \infty.
\]
(See Assumption \ref{assump:wigner} for the definition of the constants $C_3$ and $r_3$.)
Thus, from \eqref{eq:A^2_sum},
\beq \label{eq:A^2_sum_2}
	\E \left[ \E[A_{ij}^2|B_{ij}] \right] = \SNR N \E\left[ (P^{(1)}_{ij})^2 \right] + \frac{\SNR^2}{3} \E\left[ P^{(1)}_{ij} P^{(3)}_{ij} \right] + O(N^{-1}),
\eeq
and we conclude from \eqref{eq:B_sum}, \eqref{eq:B_ij}, \eqref{eq:int_part_1}, and \eqref{eq:A^2_sum} that
\beq \begin{split} \label{eq:B_mean}
	\E \left[ \sum_{i<j} B_{ij} + \frac{1}{2N^2} \sum_{i<j} \E[A_{ij}^2|B_{ij}] \right] &= \frac{\SNR^2}{6N^2} \sum_{i<j} \E\left[ P^{(1)}_{ij} P^{(3)}_{ij} \right] + O(N^{-1}) \\
	&= \frac{\SNR^2}{12} \E\left[ P^{(1)}_{12} P^{(3)}_{12} \right] + O(N^{-1}).
\end{split} \eeq

We next compute the variance of \eqref{eq:B_sum}. Since $\E[B_{ij} A_{ij}^2 | B_{ij}] = B_{ij} \E[A_{ij}^2 | B_{ij}]$ by the definition of the conditional expectation, we have
\beq \begin{split} \label{eq:B^2}
	\E\left[ \left( B_{ij} + \frac{\E[A_{ij}^2|B_{ij}]}{2N^2} \right)^2 \right] = \E[ B_{ij}^2] + \frac{1}{N^2} \E[A_{ij}^2 B_{ij}] + \frac{1}{4N^4} \E[ (\E[A_{ij}^2|B_{ij}])^2].
\end{split} \eeq
Again, by definition,
\beq \label{eq:39}
	\E[B_{ij}^2] = \frac{\SNR^2}{4N^2} \E \left[ (P^{(2)}_{ij})^2 -2 (P^{(1)}_{ij})^2 P^{(2)}_{ij} + (P^{(1)}_{ij})^4 \right],
\eeq
\beq \label{eq:40}
	\frac{1}{N^2} \E[A_{ij}^2 B_{ij}] = \frac{\SNR^2}{2N^2} \E \left[ (P^{(1)}_{ij})^2 \Big( P^{(2)}_{ij} - (P^{(1)}_{ij})^2 \Big) \right] + O(N^{-3}),
\eeq
and
\beq
	\frac{1}{4N^4} \E[ (\E[A_{ij}^2|B_{ij}])^2] = \frac{\SNR^2}{4N^2} \E[ (\E[(P^{(1)}_{ij})^2|B_{ij}])^2] + O(N^{-3}).
\eeq
Adding \eqref{eq:39} and \eqref{eq:40}, we find that
\beq \label{eq:B^2_first_two}
	\E[ B_{ij}^2] + \frac{1}{N^2} \E[A_{ij}^2 B_{ij}] = \frac{\SNR^2}{4N^2} \E \left[ (P^{(2)}_{ij})^2 - (P^{(1)}_{ij})^4 \right] + O(N^{-3}).
\eeq
Using the identity
\beq \label{eq:var_identity}
	\E[(P^{(1)}_{ij})^4] = \E \left[ (\E[(P^{(1)}_{ij})^2|B_{ij}])^2 + \Var \big( (P^{(1)}_{ij})^2|B_{ij} \big) \right],
\eeq
and combining \eqref{eq:B^2} and \eqref{eq:B^2_first_two}, we have
\beq \label{eq:B_var}
	\E\left[ \left( B_{ij} + \frac{\E[A_{ij}^2|B_{ij}]}{2N^2} \right)^2 \right] = \frac{\SNR^2}{4N^2} \E \left[ (P^{(2)}_{ij})^2 - \Var \big( (P^{(1)}_{ij})^2|B_{ij} \big) \right] + O(N^{-3}).
\eeq

From \eqref{eq:B_ij} and \eqref{eq:A^2_sum}, we know that
\[
	\E \left[ B_{ij} + \frac{\E[A_{ij}^2|B_{ij}]}{2N^2} \right] = O(N^{-2}),
\]
and hence
\[
	\sum_{i<j} \E\left[ \left( B_{ij} + \frac{\E[A_{ij}^2|B_{ij}]}{2N^2} \right)^2 \right]  - \Var \left( \sum_{i<j} B_{ij} + \frac{1}{2N^2} \sum_{i<j} \E[A_{ij}^2|B_{ij}] \right) = O(N^{-2}).
\]
We thus conclude from \eqref{eq:B_mean} and \eqref{eq:B_var} that
\beq \begin{split} \label{eq:limiting_normal}
	\sum_{i<j} B_{ij} + \frac{1}{2N^2} \sum_{i<j} \E[A_{ij}^2|B_{ij}]  \Rightarrow \caN \left( \frac{\SNR^2}{12} \E\left[ P^{(1)}_{12} P^{(3)}_{12} \right], \frac{\SNR^2}{8} \E \left[ (P^{(2)}_{12})^2 - \Var \big( (P^{(1)}_{12})^2|B_{12} \big) \right] \right).
\end{split} \eeq

\subsection{LLN part} \label{subsec:LLN}

From the definition of $C_{ij}$ in \eqref{eq:ABC}, applying Lemma \ref{lem:LLN}, we find that
\beq
	\sum_{i<j} C_{ij} = \frac{\SNR^2}{48} \E\left[ P^{(4)}_{12} -3 (P^{(2)}_{12})^2 - 4P^{(1)}_{12} P^{(3)}_{12} + 12(P^{(1)}_{12})^2 P^{(2)}_{12} -6(P^{(1)}_{12})^4 \right] + \caO(N^{-1}).
\eeq
Since $\E[P^{(4)}_{12}]=0$ from \eqref{eq:int_part_1}, applying the integration by parts formulas \eqref{eq:int_part_2} and
\beq \begin{split} \label{eq:int_part_3}
	\E\left[ (P^{(1)}_{ij})^2 P^{(2)}_{ij} \right] &= \int_{\R} \frac{p'(t)^2 p''(t)}{p(t)^2} \dd t = -\int_{\R} \left( \frac{1}{p(t)} \right)' (p'(t) p''(t)) \dd t \\
	&= \int_{\R} \frac{p''(t)^2 + p'(t) p'''(t)}{p(t)} \dd t = \E\left[ (P^{(2)}_{ij})^2 + P^{(1)}_{ij} P^{(3)}_{ij} \right],
\end{split} \eeq
we find that
\beq \label{eq:LLN}
	\sum_{i<j} C_{ij} = -\frac{\SNR^2}{48} \E[P^{(1)}_{12} P^{(3)}_{12}] + \caO(N^{-1}).
\eeq

\subsection{Diagonal part} \label{subsec:diagonal}

For the diagonal part in \eqref{eq:LR_formula}, it is not hard to see that
\beq \begin{split} 
	&\log \prod_k \frac{p_d(\sqrt{N} M_{kk}-\sqrt{\SNR N} x_k^2)}{p_d(\sqrt{N} M_{kk})} \\
	&= \sum_k \left( - \sqrt{\frac{\SNR}{N}} \frac{p_d'(\sqrt{N} M_{kk})}{p_d(\sqrt{N} M_{kk})} + \frac{\SNR}{2N} \left( \frac{p_d''(\sqrt{N} M_{kk}) p_d(\sqrt{N} M_{kk}) - p_d'(\sqrt{N} M_{kk})^2}{p_d(\sqrt{N} M_{kk})^2} \right) \right) 
	+ \caO(N^{-\frac{1}{2}}),
\end{split} \eeq
which does not depend on $\bsx$. Applying the integration by parts formulas, one can evaluate the mean and the variance to show that
\beq \label{eq:diagonal_Taylor}
	\log \prod_k \frac{p_d(\sqrt{N} M_{kk}-\sqrt{\SNR N} x_k^2)}{p_d(\sqrt{N} M_{kk})} \Rightarrow \caN (-\frac{\SNR F_d}{2}, \SNR F_d).
\eeq

\subsection{Proof of Theorem \ref{thm:main}}

We now combine Proposition \ref{prop:Z_limit}, Equations \eqref{eq:decompose}, \eqref{eq:limiting_normal}, \eqref{eq:LLN}, and \eqref{eq:diagonal_Taylor} to find that $\log \caL(M;\lambda)$ converges to the Gaussian whose mean is
\beq \begin{split}
	&\frac{1}{4} \left( \log (1-\SNR F) + \SNR F+ \frac{\SNR^2 F^2}{2}\right) - \frac{\SNR^2}{24} \E[ (P^{(1)}_{12})^4 ] 
	+ \frac{\SNR^2}{12} \E[ P^{(1)}_{12} P^{(3)}_{12}] -\frac{\SNR^2}{48} \E[P^{(1)}_{12} P^{(3)}_{12}] -\frac{\SNR F_d}{2} \\
	&= \frac{1}{4} \left( \log (1-\SNR F) + \SNR F+ \frac{\SNR^2 F^2}{2} \right) - \frac{\SNR^2}{16} \E[ (P^{(2)}_{12})^2] -\frac{\SNR F_d}{2} = -\rho
\end{split} \eeq
where we also applied the integration by parts formulas \eqref{eq:int_part_2} and \eqref{eq:int_part_3}. (See also Remark \ref{rem:conditional}.) Similarly, we can also find that the variance of the limiting Gaussian is 
\beq \begin{split}
	&-\frac{1}{2} \left( \log (1-\SNR F) + \SNR F+ \frac{\SNR^2 F^2}{2}  \right) + \frac{\SNR^2}{8} \E\left[\Var_B \big( (P^{(1)}_{12})^2 \big) \right] \\
	&\qquad \qquad + \frac{\SNR^2}{8} \E \left[ (P^{(2)}_{12})^2 - \Var_B \big( (P^{(1)}_{12})^2 \big) \right] + \SNR F_d = 2\rho.
\end{split} \eeq
This concludes the proof of Theorem \ref{thm:main}.

\section{Asymptotic independence of fluctuations} \label{sec:indep}

In this section, we prove Proposition \ref{prop:Z_limit}. The key idea for the proof is to decompose the fluctuation of $Z$ into two parts. First, notice that
\[
	\exp \left( A_{ij} x_i x_j \right) = \cosh \frac{A_{ij}}{N} + N x_i x_j \sinh \frac{A_{ij}}{N} ,
\]
which can be checked by considering the cases $x_i x_j = N^{-1}$ and $x_i x_j = -N^{-1}$ separately.
Thus, if we let
\beq \label{eq:def_zeta}
	\zeta := \frac{1}{2^N} \sum_{\bsx} \prod_{i<j} \left( 1 + N x_i x_j \tanh \frac{A_{ij}}{N} \right),
\eeq
then it is direct to see that
\beq
	\log Z - \log \zeta = \log \prod_{i<j} \cosh \left( \frac{A_{ij}}{N} \right).
\eeq
Recall that $A_{ij} = \caO(\sqrt{N})$. By Taylor expanding the right hand side, we get
\beq
	\log Z - \log \zeta = \sum_{i<j} \left( \frac{A_{ij}^2}{2N^2} - \frac{A_{ij}^4}{12N^4} \right) + \caO(N^{-1}),
\eeq
and we define
\beq \label{eq:zeta'} 
	\zeta' := \sum_{i<j} \left( \frac{A_{ij}^2}{2N^2} - \frac{A_{ij}^4}{12N^4} \right).
\eeq
It is then obvious that $\log Z = \log \zeta + \zeta' + \caO(N^{-1})$. 

In the rest of this section, we first find the fluctuation of $\zeta'$, which can be readily obtained by the CLT. Then, we adopt the strategy for the proof of Proposition 2.2 in \cite{ALR87} to conclude the proof of Proposition \ref{prop:Z_limit} by finding the asymptotic fluctuation of $\log \zeta$.

\subsection{Conditional law of $\zeta'$} \label{subsec:zeta'}

Recall the definition of $\zeta'$ in \eqref{eq:zeta'}. Since 
\[
	A_{ij} = -\sqrt{\SNR N} P^{(1)}_{ij} + \caO(N^{-1/2})
\]
and $P^{(1)}_{ij} = \caO(1)$, we find that
\[
	\sum_{i<j} \frac{A_{ij}^4}{N^4} \to \frac{\SNR^2}{2} \E[ (P^{(1)}_{12})^4 ].
\]
We want to apply the central limit limit theorem to the array $(A_{ij}^2/(2N^2): 1 \leq i < j \leq N)$, conditional on $B$. Since $P^{(1)}_{ij}$ and $N B_{ij}$ are $N$-independent random variables, we find that $\Var_B ( (P^{(1)}_{ij})^2 )$ is a non-negative, $N$-independent random variable. In particular, $\E[\Var_B ( (P^{(1)}_{ij})^2)]$ is a non-negative, $N$-independent constant.

Suppose that $\E[\Var_B ( (P^{(1)}_{ij})^2)] > 0$. Then, since
\[
	\frac{2}{N(N-1)} \sum_{i<j} \Var_B \big( \frac{A_{ij}^2}{\SNR N} \big) \to \E[\Var_B ( (P^{(1)}_{ij})^2)] > 0,
\]
we find that there exists a constant $c$, independent of $N$, such that
\[
	\sum_{i<j} \Var_B \big( A_{ij}^2 \big) > cN^4
\]
almost surely. Further, since
\[
	\E[ (A_{ij}^2 - \E_B[A_{ij}^2])^4] = O(N^4),
\]
Lyapunov's condition for the central limit theorem is satisfied with the array $(A_{ij}^2: 1 \leq i < j \leq N)$ as
\[
	\left( \sum_{i<j} \E\left[ \Big( A_{ij}^2 - \E_B \big[ A_{ij}^2 \big] \Big)^4 \right] \right) \Big/ \left( \sum_{i<j} \Var_B \big( A_{ij}^2 \big) \right)^2 = O(N^{-2})
\]
almost surely. Since Lyapunov's condition is scaling-invariant, we find that the central limit theorem holds for $(A_{ij}^2/(2N^2): 1 \leq i < j \leq N)$, conditional on $B$, and the asymptotic law of the random variable
\beq
	U:= \zeta' - \left( \frac{1}{2N^2} \sum_{i<j} \E_B[A_{ij}^2] - \frac{\SNR^2}{24} \E[ (P^{(1)}_{12})^4 ] \right)
\eeq
is given by a centered Gaussian with variance $\theta$ defined in \eqref{eq:theta}, since
\[
	\left( \sum_{i<j} \Var_B \big( \frac{A_{ij}^2}{2N^2} \big) \right) \to \frac{\SNR^2}{8} \E[ \Var_B \big( (P^{(1)}_{12})^2 \big)].
\]

If $\E[\Var_B ( (P^{(1)}_{ij})^2)] = 0$, then $(P^{(1)}_{ij})^2 = \E_B [ (P^{(1)}_{ij})^2 ]$ almost surely, and hence $A_{ij}^2 = \E_B [A_{ij}^2] + \caO(1)$. We then find that
\[
	\left( \sum_{i<j} \Var_B \big( \frac{A_{ij}^2}{2N^2} \big) \right) \to 0
\]
almost surely, which implies that
\[
	\frac{1}{2N^2} \sum_{i<j} A_{ij}^2 - \frac{1}{2N^2} \sum_{i<j} \E_B[A_{ij}^2] \to 0
\]
almost surely. Thus, in this case, $U$ converges to $0$, and hence the second part of Proposition \ref{prop:Z_limit} holds with $\theta = 0$. This completes the proof of the second part of Proposition \ref{prop:Z_limit}.

\subsection{Conditional law of $\zeta$} \label{subsec:zeta}

In this subsection, we prove the first part of Proposition \ref{prop:Z_limit} by a combinatorial approach. Consider the product $\prod_{i<j} \left( 1 + N x_i x_j \tanh (A_{ij}/N) \right)$ in \eqref{eq:def_zeta} as a polynomial of $x_k's$. A term in this polynomial is of the form
\beq \label{eq:zeta_poly}
	N^{\ell} x_{i_1} x_{j_1} x_{i_2} x_{j_2} \dots x_{i_{\ell}} x_{j_{\ell}} \tanh \frac{A_{i_1 j_1}}{N} \tanh \frac{A_{i_2 j_2}}{N} \dots \tanh \frac{A_{i_{\ell} j_{\ell}}}{N}.
\eeq
If the multiplicity of any of the $x_{i_1}, \dots, x_{i_{\ell}}, x_{j_1}, \dots, x_{j_{\ell}}$ in \eqref{eq:zeta_poly} is odd, then the term \eqref{eq:zeta_poly} is negligible in \eqref{eq:def_zeta}; for instance, if the multiplicity of the $x_{i_1}$ is odd, then the contribution of the term \eqref{eq:zeta_poly} from the case $x_{i_1} = 1/\sqrt{N}$ and that from the case $x_{i_1} = -1/\sqrt{N}$ exactly cancel each other. It in particular implies that \eqref{eq:zeta_poly} is actually of the form
\beq \begin{split} \label{eq:zeta_poly2}
	&N^{\ell} x_{i_1} x_{j_1} x_{i_2} x_{j_2} \dots x_{i_{\ell}} x_{j_{\ell}} \tanh \frac{A_{i_1 j_1}}{N} \tanh \frac{A_{i_2 j_2}}{N} \dots \tanh \frac{A_{i_{\ell} j_{\ell}}}{N} \\
	&= \tanh \frac{A_{i_1 j_1}}{N} \tanh \frac{A_{i_2 j_2}}{N} \dots \tanh \frac{A_{i_{\ell} j_{\ell}}}{N},
\end{split} \eeq
where the pair $(i_k, j_k)$ does not repeat. We then find the one-to-one correspondence between the right hand side of \eqref{eq:zeta_poly2} and a simple and closed graph on the vertex set $[n] := \{ 1, 2, \dots, n \}$, i.e., with no self-edges, repeated edges, or vertices of odd degree. Define for a simple and closed graph $\Gamma$ 
\[
	w(\Gamma):= \prod_{(i, j) \in E(\Gamma)} \tanh \Big( \frac{A_{ij}}{N} \Big),
\]
where $E(\Gamma)$ denotes the edge set of $\Gamma$. A straightforward calculation shows that 
\beq\label{eq:zeta_by_graphs}
	\zeta = \sum_{\Gamma} w(\Gamma). 
\eeq

We further decompose $w(\Gamma)$ by considering simple loops, or cycles, contained in $\Gamma$. (A simple loop is a graph where every vertex has degree $2$.) For example, if the edges of $\Gamma$ are $(1, 2)$, $(1, 3)$, $(2, 3)$, $(3, 4)$, $(3, 5)$, $(4, 5)$, then the degree of the vertex $3$ is $4$ and thus $\Gamma$ is not a simple loop. However, $\Gamma$ can be decomposed into simple loops $\gamma_1$ and $\gamma_2$ whose sets of edges are $\{ (1, 2), (1, 3), (2, 3) \}$ and $\{ (3, 4), (3, 5), (4, 5) \}$, respectively.

Let us denote a simple loop by the lowercase $\gamma$. Note that for a simple loop $\gamma$, $\E_B[w(\gamma)] = 0$, since the $A_{ij}$'s are independent and their density functions given $B$ is symmetric. (See Remark \ref{rem:center}.) We introduce the following result from \cite{ALR87}:
\begin{lemma}\label{lem:zeta_close}
	The random variable $\zeta - \prod_{\gamma} (1+w(\gamma))$ converges in probability to 0.
\end{lemma}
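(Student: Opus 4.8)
The plan is to expand both $\zeta$ and $\prod_\gamma(1+w(\gamma))$ into polynomials in the variables $\{\tanh(A_{ij}/N)\}$ and to compare them term by term, following the strategy in the proof of Proposition~2.2 of \cite{ALR87}. By \eqref{eq:zeta_by_graphs} we already have $\zeta=\sum_\Gamma w(\Gamma)$, the sum over all simple closed graphs $\Gamma$ on $[N]$. Since $K_N$ contains only finitely many simple loops (cycles), expanding the product gives $\prod_\gamma(1+w(\gamma))=\sum_{\mathcal C}\prod_{\gamma\in\mathcal C}w(\gamma)$, where $\mathcal C$ runs over finite families of distinct simple loops. If the loops of $\mathcal C$ are pairwise edge-disjoint, their union is a simple closed graph and $\prod_{\gamma\in\mathcal C}w(\gamma)=w(\bigcup\mathcal C)$; the assignment $\mathcal C\mapsto\bigcup\mathcal C$ sends the edge-disjoint families onto all simple closed graphs, the fibre over $\Gamma$ being the set of decompositions of $\Gamma$ into edge-disjoint simple loops, whose cardinality I denote $c(\Gamma)\ge1$ (note $c(\Gamma)=1$ whenever $\Gamma$ is a vertex-disjoint union of cycles). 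Hence
\[
	\prod_\gamma\big(1+w(\gamma)\big)-\zeta
	\;=\; \sum_{\Gamma}\big(c(\Gamma)-1\big)\,w(\Gamma) \;+\; \sum_{\mathcal C\ \text{not edge-disjoint}}\ \prod_{\gamma\in\mathcal C}w(\gamma)
	\;=:\; R_1+R_2 ,
\]
and it suffices to show $R_1,R_2\to0$ in $L^2$ (hence in probability).

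The analytic inputs are: under $\bsH_0$, $\tanh(A_{ij}/N)$ is an odd function of the symmetric variable $\sqrt N M_{ij}=\sqrt N H_{ij}$ by \eqref{eq:ABC}, so all its odd moments vanish; and $\E[\tanh(A_{ij}/N)^{2\ell}]=O(N^{-1})$ uniformly in $\ell\ge1$, because $|\tanh t|\le|t|$ gives $\tanh(A_{ij}/N)^{2\ell}\le(A_{ij}/N)^2$ and $\E[(A_{ij}/N)^2]=O(N^{-1})$ by \eqref{eq:ABC} and Lemma~\ref{lem:stochastic_bound}. Since distinct off-diagonal entries are independent, $\E[w(\Gamma)w(\Gamma')]=0$ unless $E(\Gamma)=E(\Gamma')$, i.e.\ $\Gamma=\Gamma'$, so $\E[R_1^2]=\sum_{\Gamma:\,c(\Gamma)\ge2}(c(\Gamma)-1)^2\,\E[w(\Gamma)^2]\le\sum_{\Gamma:\,c(\Gamma)\ge2}c(\Gamma)^2\,(C/N)^{|E(\Gamma)|}$; the analogous cross-term argument, now permitting repeated edges, reduces $\E[R_2^2]$ to a sum over pairs of non-edge-disjoint families whose odd-multiplicity edges cancel.

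The remaining combinatorial estimate is the core of the argument. If $c(\Gamma)\ge2$ then $\Gamma$ has a connected component which is not a single cycle; such a component has a vertex of degree $\ge4$ and therefore strictly more edges than vertices. Writing $\Gamma$ as the edge-disjoint union of its cycle components and its non-cycle components, the weight factorizes and $c(\Gamma)$ depends only on — and is bounded in terms of — the non-cycle part. Organizing the labeled graphs by homeomorphism type (contracting degree-$2$ vertices) and summing the geometric series over the lengths of the subdivided paths — which converges precisely because $\SNR F<1$ — each homeomorphism type of a connected closed non-cycle contributes $O(N^{-1})$ (and $O(N^{-(\kappa-1)})$ if its cyclomatic number is $\kappa$), and summing over the finitely many types of each cyclomatic number gives $\E[R_1^2]=O(N^{-1})$. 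An entirely parallel count handles $R_2$: a non-edge-disjoint family carries a repeated edge, so it uses strictly more distinct edges than it has free vertices to sum over, whence $\E[R_2^2]=O(N^{-1})$ as well. Combining, $\E\big[(\zeta-\prod_\gamma(1+w(\gamma)))^2\big]\to0$.

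I expect the main obstacle to be exactly this last step: bookkeeping the sum over the (for each $N$, numerous) non-cycle graphs and non-edge-disjoint families, and showing that every excess or repeated edge yields a net factor $N^{-1}$ uncompensated by a vertex summation, uniformly in the configuration. This relies on the graph-theoretic fact that connected closed graphs other than cycles satisfy $|E|>|V|$, on the uniform bound on all even moments of $\tanh(A_{ij}/N)$, and — crucially — on the subcriticality $\SNR F<1$, which makes the geometric series over path lengths, and hence the whole expansion, summable; this is precisely the technical content of the proof of Proposition~2.2 in \cite{ALR87}, which we adapt.
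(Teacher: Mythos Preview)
Your proposal is correct and takes exactly the same approach as the paper: the paper does not give its own proof of this lemma but simply refers the reader to the end of Section~3 in \cite{ALR87}, and what you have sketched is precisely that argument (the graph expansion of both sides, orthogonality of the $w(\Gamma)$, the uniform bound $\E[\tanh^2(A_{ij}/N)]=O(N^{-1})$, and the combinatorial deficit $|E|>|V|$ for non-cycle closed components together with subcriticality $\SNR F<1$ to sum the geometric series over path lengths).
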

For the proof of Lemma \ref{lem:zeta_close}, see the end of Section 3 in \cite{ALR87}.

For the proof of the first part of Proposition \ref{prop:Z_limit}, we claim for $\prod_{\gamma} (1+w(\gamma))$ that
\beq \label{eq:1+w_expand}
	\prod_{\gamma} \left( 1+ w(\gamma) \right) = \exp \left( \sum_{\gamma} \Big( w(\gamma) - \frac{w(\gamma)^2}{2} \Big)(1+ \caO(N^{-1})) \right).
\eeq
To prove the claim, we first notice that if $|E(\gamma)|=k$, i.e., a simple loop $\gamma$ has $k$ edges, then
\beq\label{eq:max_bound}
    \E |w(\gamma)|^2 \leq \frac{(\SNR F + \epsilon)^k}{N^k}, \quad \E |w(\gamma)|^s = O(N^{-\frac{sk}{2}}) 
\eeq
for any $\epsilon > 0$ and any positive integer $s$, which can be checked from an inequality $|\tanh x| \leq |x|$. Similarly, applying Markov's inequality, we can show that
\beq\label{eq:weight_bound}
	\max_{\gamma:|E(\gamma)| = k} |w(\gamma)|^s = \caO(N^{-\frac{s}{2}|E(\gamma)|}), \quad \max_{\gamma:|E(\gamma)| = k} \E_B |w(\gamma)|^s = \caO(N^{-\frac{s}{2}|E(\gamma)|})
\eeq
for any positive integer $s$, where $E(\gamma)$ is the set of edges of $\gamma$. We thus find from the Taylor expansion that
\[
	1+ w(\gamma) = \exp \left( \Big( w(\gamma) - \frac{w(\gamma)^2}{2} \Big)(1+ \caO(N^{-1})) \right).
\]

From the expansion \eqref{eq:1+w_expand}, we find that it is required to prove convergence results for the sums of $w(\gamma)$ and $w(\gamma)^2$. We claim the following:
\begin{lemma}\label{lem:w_gamma}
Let $\xi:=\sum_{\gamma} w(\gamma)$, $\eta:=\sum_{\gamma} w(\gamma)^2$. Let $\nu$ be given as in \eqref{eq:nu}. Then, as $N \to \infty$,
\begin{enumerate}
    \item[(i)] The conditional law $\xi$ given $B$ converges in probability to a centered Gaussian with variance $2\nu$, and
    \item[(ii)] $\eta$ converges in probability to $2\nu$.
\end{enumerate} 
\end{lemma}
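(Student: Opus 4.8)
The plan is to prove Lemma \ref{lem:w_gamma} by the method of moments, carried out conditionally on $B$ for part (i) and unconditionally (or conditionally, then averaged) for part (ii). The starting point is the explicit combinatorial description of simple loops: a simple loop $\gamma$ with $|E(\gamma)| = k$ is determined by a cyclic sequence of $k$ distinct vertices, so the number of simple loops on $k$ prescribed vertices is $\frac{(k-1)!}{2}$ for $k \geq 3$, and there are none with $k = 1, 2$ (no self-edges or repeated edges). Hence $\sum_{\gamma: |E(\gamma)| = k} 1 = \binom{N}{k} \frac{(k-1)!}{2} \sim \frac{N^k}{2k}$. Combined with the estimate $\E_B[w(\gamma)^2] = \prod_{(i,j)\in E(\gamma)} \tanh^2(A_{ij}/N)$ and the fact that, by Lemma \ref{lem:LLN} applied to $\frac{A_{ij}^2}{N}$ together with $\tanh x = x + O(x^3)$, one has $\E_B[w(\gamma)^2] = (\SNR F)^k N^{-k}(1 + o(1))$ on average over $\gamma$, this already shows $\E_B[\eta] \to \sum_{k \geq 3} \frac{(\SNR F)^k}{2k} = 2\nu$. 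To upgrade this to convergence in probability for part (ii) I would compute $\Var_B(\eta)$ (or just $\E[\eta^2] - (\E\eta)^2$) and show it vanishes: the dominant contribution comes from pairs of disjoint loops, which factorizes, while overlapping loops form a lower-order set because the shared edges cost extra factors of $N^{-1}$ without a compensating combinatorial gain; here the high-probability bounds \eqref{eq:weight_bound} and the moment bounds \eqref{eq:max_bound} do the bookkeeping.

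For part (i), I would show that the conditional moments $\E_B[\xi^m]$ converge to the moments of $\caN(0, 2\nu)$. Expanding $\xi^m = \sum_{\gamma_1, \dots, \gamma_m} w(\gamma_1)\cdots w(\gamma_m)$, the key point is that the expectation of a product of $w(\gamma)$'s over a collection of loops is negligible unless every edge appearing in the multiset $\bigcup E(\gamma_t)$ appears an even number of times — this is because $w(\gamma)$ is built from $\tanh(A_{ij}/N)$ with $A_{ij}$ an odd function of $\sqrt N M_{ij}$ (up to the negligible $\caO(N^{-1})$ correction in its definition), so conditionally on $B$ — and $B_{ij}$ is an even function of $\sqrt N M_{ij}$ — the conditional expectation of a single unpaired factor $\tanh(A_{ij}/N)$ vanishes to leading order. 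Thus the surviving terms pair up the $m$ loops so that, generically, $\gamma_t$ and $\gamma_{t'}$ share all their edges, i.e. are equal; odd $m$ gives $0$ and even $m = 2p$ gives $(2p-1)!! (\E_B[\eta])^{p}(1+o(1)) \to (2p-1)!!(2\nu)^p$, which is exactly the $2p$-th moment of $\caN(0,2\nu)$. Degenerate pairings — where three or more loops share edges, or two loops overlap only partially — must be shown to contribute $o(1)$, again by counting: partial overlap forces shared edges that are "wasted" combinatorially.

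To assemble the lemma's consequences I would then plug Lemma \ref{lem:w_gamma} into \eqref{eq:1+w_expand} and Lemma \ref{lem:zeta_close}: since $\eta \to 2\nu$ in probability and $\xi \Rightarrow \caN(0, 2\nu)$ conditionally on $B$, one gets $\log \zeta = \xi - \frac12 \eta + \caO(N^{-1}) \Rightarrow \caN(-\nu, 2\nu)$ conditionally on $B$, with limiting parameters independent of $B$ — this gives the first part of Proposition \ref{prop:Z_limit}. The asymptotic orthogonality claim (part 3 of the proposition) follows because $\log \zeta$ has conditionally $B$-independent law while $\zeta'$, after subtracting its conditional mean, is $U$, whose fluctuation is driven by $\Var_B((P^{(1)}_{12})^2)$; one checks $\E_B[(\xi - \E_B\xi)\,U] \to 0$ using that $\xi$ is an odd polynomial in the $\tanh(A_{ij}/N)$ while $U$ is (to leading order) a function of the even quantities $P^{(2)}_{ij}, (P^{(1)}_{ij})^2$, so their conditional covariance is a sum of odd-edge-multiplicity terms that vanish to leading order.

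The main obstacle I anticipate is the uniform control of the "degenerate" terms in the moment computations — namely proving that collections of loops with partial edge overlaps, or with edges shared among three or more loops, contribute $o(1)$ to $\E_B[\xi^m]$ and to $\Var_B(\eta)$. This requires a careful combinatorial argument tracking, for each configuration of overlapping loops, the number of free vertices (giving a factor $N^{\#\text{vertices}}$) against the number of distinct edges (giving a factor $N^{-\#\text{edges}}$ from the $\tanh$ bounds), and showing the exponent of $N$ is strictly negative whenever the configuration is not a disjoint union of repeated pairs. The bounds \eqref{eq:max_bound}–\eqref{eq:weight_bound} are exactly the inputs needed, but organizing the case analysis so that it is uniform in $m$ and conditional on $B$ is the delicate part; this is precisely where we lean on the argument of Proposition 2.2 in \cite{ALR87}, adapting it to the conditional-on-$B$ setting and to the fact that our edge weights $\tanh(A_{ij}/N)$ are not i.i.d.\ but are i.i.d.\ conditionally on $B$ up to the negligible correction terms.
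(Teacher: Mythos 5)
Your proposal follows essentially the same route as the paper: a conditional method-of-moments argument, driven by the observation that $\tanh(A_{ij}/N)$ has zero conditional mean because $A_{ij}$ is odd in $\sqrt{N}M_{ij}$ while $B_{ij}$ is even, the count of $\sim N^k/(2k)$ simple loops of length $k$ giving $\E[\eta]\to 2\nu$, a variance bound for concentration, and a pairing argument for higher moments in which non-paired or partially overlapping configurations are shown to be lower order by comparing vertex counts against edge counts, exactly as in \cite{ALR87}. The paper is slightly more explicit on two technical points that you elide — it truncates to loops of length $\le j$ and uses dominated convergence to control the infinite tail, and it notes that the conditional edge weights are independent but not identically distributed given $B$, which is precisely why the $\Var(\E_B[\xi^2])\to 0$ step is needed rather than a bare conditional CLT — but these refinements are implicit in your outline and do not change the strategy.
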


See Appendix \ref{sec:proof_Lemma_w_gamma} for the proof of Lemma \ref{lem:w_gamma}.

Since
\[
	\prod_{\gamma} \left( 1+ w(\gamma) \right) = \exp \left( \xi - \frac{\eta}{2} + \caO(N^{-1}) \right),
\]
it is immediate to prove the first part of Proposition \ref{prop:Z_limit} from Lemmas \ref{lem:zeta_close} and \ref{lem:w_gamma}. Furthermore, since the source of the fluctuation of $\log \zeta$ is $\xi = \sum_{\gamma} w(\gamma)$, which is orthogonal to $A_{ij}^2$ that generates the fluctuation of $\zeta'$ under $L_B^2$, we find that the third part of Proposition \ref{prop:Z_limit}, the asymptotic orthogonality of $\log \zeta$ and $\zeta'$, holds.

\section{Conclusion and future works} \label{sec:conclusion}

In this paper, we studied the detection problem for the spiked Wigner model. Assuming the Rademacher prior, we proved the asymptotic normality of the log likelihood ratio with precise formulas for the mean and the variance of the limiting Gaussian. We computed the limiting error of the likelihood ratio test, and we conducted numerical experiments to compare the results from the experiments with the theoretical error. We also obtained the corresponding results for the spiked IID model where the noise is asymmetric.

A natural extension of the current work is to prove the corresponding results for other models, especially the spiked Wigner models with the spherical prior or the sparse Rademacher prior, and prove the conjecture that the limiting distribution of the log likelihood ratio depends on the prior. It is also of interest to extend the techniques in this paper to spiked rectangular models.

\subsubsection*{Acknowledgments}
We thank the anonymous referees for their helpful comments and suggestions. The third author thanks to Ji Hyung Jung and Seong-Mi Seo for helpful comments. The work of the second author was performed when he was affiliated with Stochastic Analysis and Application Research Center (SAARC) at KAIST. The first author was supported in part by National Research Foundation of Korea (NRF) grant funded by the Korea government(MSIT) (No. 2021R1C1C11008539 and 2023R1A2C1005843).
The second author and the third author were supported in part by National Research Foundation of Korea (NRF) grant funded by the Korea government(MSIT) (No. 2019R1A5A1028324).

\begin{appendix}

\section{Proof of Lemmas \ref{lem:stochastic_bound} and \ref{lem:LLN}} \label{sec:lemma_proof}

In this section, we provide the detail of the proof of the high probability estimates, Lemmas \ref{lem:stochastic_bound} and \ref{lem:LLN}.

\begin{proof}[Proof of Lemma \ref{lem:stochastic_bound}]
We first prove the estimate for $H_{ij}$. For given $\epsilon, D > 0$, choose an integer $D' > D/\epsilon$. By applying Markov's inequality,
\[
	\p( |H_{ij}| > N^{-\frac{1}{2} + \epsilon}) = \p( N^{\frac{D'}{2}} |H_{ij}|^{D'} > N^{D' \epsilon}) \leq N^{-D'\epsilon} N^{\frac{D'}{2}} \E[|H_{ij}|^{D'}] < C_{D'} N^{-D'\epsilon} < N^{-D}
\]
for any sufficiently large $N$, where we used the finite moment assumption for $H_{ij}$ in Definition \ref{defn:wigner}. This proves that $H_{ij} = \caO(N^{-\frac{1}{2}})$. Note that it also implies that $\sqrt{N} H_{ij} = \caO(1)$.

From the polynomial bound assumption for $p^{(s)}/p$ in Assumption \ref{assump:wigner}, 
\[
	\left| \frac{p^{(s)}(\sqrt{N}H_{ij})}{p(\sqrt{N}H_{ij})} \right| \leq C_s |\sqrt{N}H_{ij}|^{r_s}.
\]
Thus, for any $\epsilon', D > 0$
\[
	\p \left( \left| \frac{p^{(s)}(\sqrt{N}H_{ij})}{p(\sqrt{N}H_{ij})} \right| > N^{\epsilon'} \right) \leq \p \left( |\sqrt{N}H_{ij}|> \left( \frac{N^{\epsilon'}}{C_s} \right)^{\frac{1}{r_s}} \right)
\]
for any sufficiently large $N$. Since $\epsilon'$ is arbitrary, this proves that $\frac{p^{(s)}(\sqrt{N}H_{ij})}{p(\sqrt{N}H_{ij})} = \caO(1)$. The estimate for $\frac{p_d^{(s)}(\sqrt{N}H_{ij})}{p_d(\sqrt{N}H_{ij})}$ can be proved in a similar manner.

The last part of the lemma follows from that $h^{(s)}$ and $h_d^{(s)}$ can be written as a polynomial of $(p^{(s)}/p)$'s and $(p_d^{(s)}/p_d)$'s, respectively.
\end{proof}

\begin{proof}[Proof of Lemma \ref{lem:LLN}]
We prove the first inequality for the case $k=1$ only; the general case can be handled in a similar manner. 

Set $s := s_1$. From Lemma \ref{lem:stochastic_bound}, we find that for any $\epsilon>0$, $P_{ij}^{(s)}$ are uniformly bounded by $N^{\epsilon}$ with high probability. Define
\[
	\Omega_{ij} := \{ |P_{ij}^{(s)}| < N^{\epsilon} \}, \quad \Omega := \bigcap_{i, j} \Omega_{ij}.
\]
Since $\Omega_{ij}$ holds with high probability, by taking a union bound, $\Omega$ also holds with high probability. For any $D>0$, applying Hoeffding's inequality (or high order Markov's inequality) on $\Omega$, we obtain
\beq \label{eq:LLN_proof_step1}
	\p \left( \left| \sum_{i<j} \left( P_{ij}^{(s)} - \E[P_{ij}^{(s)}|\Omega_{ij}] \right) \right| \geq N^{1+2\epsilon} \right) < N^{-D}
\eeq
for any sufficiently large $N$. Moreover, since $P_{ij}^{(s)}$ is polynomially bounded, for any $D>0$,
\[
	\left| \E[ P_{ij}^{(s)} \mathbf{1}_{\Omega_{ij}^c}] \right| \leq \E[ |P_{ij}^{(s)}|^2] \E[ \mathbf{1}_{\Omega_{ij}^c}] \leq (C_s)^2 \E[ |\sqrt{N} H_{ij}|^{r_s D'}] \p(\Omega_{ij}^c) < N^{-D}
\]
for any sufficiently large $N$, which shows that
\beq \label{eq:LLN_proof_step2}
	\E[P_{ij}^{(s)}] - \E[P_{ij}^{(s)}|\Omega_{ij}] = \E[P_{ij}^{(s)}] - \frac{\E[ P_{ij}^{(s)} \mathbf{1}_{\Omega_{ij}}]}{\p(\Omega_{ij})} = \E[ P_{ij}^{(s)} \mathbf{1}_{\Omega_{ij}^c}] + O(N^{-D}) = O(N^{-D}).
\eeq
Combining \eqref{eq:LLN_proof_step1} and \eqref{eq:LLN_proof_step2},
\[
	\sum_{i<j} P_{ij}^{(s)} = \E[P_{12}^{(s)}] + \caO(N),
\]
and after multiplying both sides by $\frac{2}{N(N-1)}$, we conclude that the first inequality of Lemma \ref{lem:LLN} holds.
\end{proof}

\section{Proof of Lemma \ref{lem:w_gamma}} \label{sec:proof_Lemma_w_gamma}

We first prove the limit of $\E[\eta]$.
Let 
\[
	\xi_k:= \sum_{\gamma:|E(\gamma)|=k} w(\gamma).
\]
Note that for distinct simple loops $\gamma_1$ and $\gamma_2$, $w(\gamma_1)$ and $w(\gamma_2)$ are orthogonal with respect to both $L^2_B$ and $L^2$. To check this, assume that the edge $(i_1, j_1)$ is in $\gamma_1$ but not in $\gamma_2$. Then, from the independence of $A_{ij}$'s,
\[ \begin{split}
	\E[w(\gamma_1) w(\gamma_2)] &= \E \left[ \tanh \frac{A_{i_1 j_1}}{N} \prod_{(i, j): (i, j) \in E(\gamma_1) \cup E(\gamma_2), (i, j) \neq (i_1, j_2)} \tanh \frac{A_{ij}}{N} \right] \\
	&= \E \left[ \tanh \frac{A_{i_1 j_1}}{N} \right] \E \left[ \prod_{(i, j): (i, j) \in E(\gamma_1) \cup E(\gamma_2), (i, j) \neq (i_1, j_2)} \tanh \frac{A_{ij}}{N} \right] = 0.
\end{split} \]
A similar idea can also be used to prove the orthogonality of $w(\gamma_1)$ and $w(\gamma_2)$ with respect to $L^2_B$.

We now consider $\E ||\xi_k ||_{L^2_B}$. Let $P(N,k) :=\frac{N!}{(N-k)!}$. Since the number of possible simple loops of length $k$ in the vertex set $[N]$ is $P(N,k)/(2k)$, we have from the orthogonality of $\gamma$'s that
\beq \begin{split} \label{eq:second_moment} 
	\E ||\xi_k ||_{L^2_B}^2 &= \sum_{\gamma:|E(\gamma)|=k} \prod_{(i, j) \in E(\gamma) } \E \left[ \tanh^2 \frac{A_{ij}}{N} \right] = \frac{P(N,k)}{2k} \E \left[ \tanh^2 \frac{A_{12}}{N} \right]^k \\
	&\to \frac{N^k}{2k}\E \left[ (\SNR N) \frac{(P^{(1)}_{12})^2}{N^2} \right]^k = \frac{(\SNR F)^k}{2k}.
\end{split} \eeq
Also, for any given $\epsilon>0$,
\beq\label{eq:domination}
    \E ||\xi_k ||_{L^2_B}^2 \leq \frac{(\SNR F + \epsilon)^k}{2k}.
\eeq
Since $\xi = \sum_{k \leq N} \xi_k$ and $\SNR F <1$, by the orthogonality and the dominated convergence theorem we have
\beq\label{eq:limiting_second_moment}
\E[\eta] = \E [\xi^2] = \E [\E_B [\xi^2]] \to \sum_{k=3}^\infty \frac{(\SNR F)^k}{2k} = 2\nu
\eeq
as $N \to \infty$. 

We next consider $\Var (\E_B [\xi^2])$. Our goal is to show that $\Var (\E_B [\xi^2]) \to 0$ as $N \to \infty$, and for this purpose, it suffices to prove that $\Var (\E_B [(\xi^{(j)})^2]) \to 0$ for any fixed $j$ where we let $\xi^{(j)} := \sum_{k \leq j} \xi_k$, since \eqref{eq:second_moment} and \eqref{eq:domination} imply that
\[
	\sum_{k \geq j} \xi_k \to 0
\]
as $j \to \infty$ in $L^2$, uniformly in $N$.
Note that
\beq\label{eq:variance_L^2-norm}
	\Var (\E_B [(\xi^{(j)})^2]) = \sum_{\gamma_1, \gamma_2: |E(\gamma_1)|, |E(\gamma_2)| \leq j} \Cov \left( \E_B[w(\gamma_1)^2], \E_B[w(\gamma_2)^2] \right).
\eeq
The covariance vanishes if there is no common edge, and otherwise
\beq\label{eq:bound_cov}
\Cov \Big( \E_B [w(\gamma_1)^2], \E_B [w(\gamma_2)^2] \Big) \leq J^m (\SNR F)^{|E(\gamma_1)|+|E(\gamma_2)|-2m} N^{-(|E(\gamma_1)|+|E(\gamma_2)|)}
\eeq
for large $N$, where $J$ is a fixed number greater than $N^{-2} \Var (A_{ij}^2)$ and $m$ is the number of common edges of $\gamma_1$ and $\gamma_2$. However, the number of isomorphism types of $\gamma_1$ and $\gamma_2$ is finite for a fixed $j$, and the number of possible choices of $\gamma_1$ and $\gamma_2$ for each isomorphism type is $O(N^{|E(\gamma_1)|+|E(\gamma_2)|-2}$), verifying that the right hand side of \eqref{eq:variance_L^2-norm} converges to $0$. Similarly, we can show that $\Var (\eta) \to 0$ as $N \to \infty$, which combined with \eqref{eq:limiting_second_moment} proves (ii).

It remains to estimate $\E_B [\xi^{2s}]$. Our goal is to show that
\beq \label{eq:Gaussian_moment}
	\E_B[\xi^{2s}] - \frac{(2s)!}{2^s s!} (\E_B[\xi^2])^s \to 0
\eeq
in probability. The coefficient $((2s)!)/(2^s s!)$ is the number of pairings of $[2s]$, or equivalently the $2s$-th moment of the Gaussian. Since the conditional odd moments of $\xi$ given $B$ are zero, (i) immediately follows from \eqref{eq:limiting_second_moment} and \eqref{eq:Gaussian_moment}. For the proof of \eqref{eq:Gaussian_moment}, we notice that it suffices to prove the statement for $\xi^{(j)}$ as in the paragraph above.

By conditional independence of $w(\gamma)$'s, we have
\beq \begin{split} \label{eq:higher_moment}
\E_B[ (\xi^{(j)})^{2s}] - \frac{(2s)!}{2^s s!} (\E_B[(\xi^{(j)})^2])^s 
= \sum_{\gamma_i: i\leq 2s} \E_B \left[\prod_{i \leq 2s} w(\gamma_i) \right] - \frac{(2s)!}{2^s s!} \sum_{\gamma_i: i\leq 2s} \prod_{i \leq s} \E_B [w(\gamma_{2i-1})w(\gamma_{2i})] ,
\end{split} \eeq
where $\gamma_i$'s run over simple loops with length at most $j$. In the first term, however, the contribution from the case where $\gamma_i$'s are paired into non-intersecting double loops cancel with the second term. Thus, we have
\beq\label{eq:canceled}
\E_B [(\xi^{(j)})^{2s}] - \frac{(2s)!}{2^s s!} (\E_B[(\xi^{(j)})^2])^s = \sum_M \E_B [w(M)],
\eeq
where $M$ runs over multigraphs with even edge multiplicity, which can be described as a union of $2s$ loops of length at most $j$, but not as a disjoint union of $s$ double loops. From an estimate
\[
	\max_{\gamma:|E(M)| = k} \E_B |w(M)|^s = \caO(N^{-\frac{s}{2}|E(M)|}),
\]
which can be proved as \eqref{eq:weight_bound}, each summand in the right hand side of \eqref{eq:canceled} is $\caO(N^{-s})$. However, since the sum of the order of all the vertices, which is equal to $4s$, is at least $4|V(M)|+4$, by the assumptions on $M$. Therefore, the contribution of each isomorphism type to the sum is $\caO(N^{-s+|V(M)|})$, hence $\caO(N^{-1})$. Since there are finitely many isomorphism types of $M$, we obtain \eqref{eq:higher_moment}. This completes the proof of of Lemma \ref{lem:w_gamma}.

\section{Proof of Theorem \ref{thm:iid}} \label{sec:iid_proof}

In this section, we provide the detail of the proof of Theorem \ref{thm:iid}.

\subsection{Preliminaries} \label{subsec:prelim_iid}

Recall that we have the following estimates from Lemma \ref{lem:stochastic_bound}: For any $1 \leq i, j \leq N$ and for any $s \geq 1$,
\beq
	X_{ij} = \caO(N^{-\frac{1}{2}}), \quad h^{(s)}(\sqrt{N}X_{ij}) = \caO(1).
\eeq
In particular, for any $\epsilon>0$, $\sqrt{N} X_{ij}$ and $h^{(s)}(\sqrt{N}X_{ij})$ are uniformly bounded by $N^{\epsilon}$ with high probability. 
We set
\beq
	Q^{(s)}_{ij} := \frac{p^{(s)}(\sqrt{N} Y_{ij})}{p(\sqrt{N} Y_{ij})},
\eeq
where we denote by $p^{(s)}$ the $s$-th derivative of $p$. We also have from Lemma \ref{lem:LLN} that 
\beq \begin{split}
	\frac{1}{N(N-1)} \sum_{i\neq j} Q_{ij}^{(s_1)} Q_{ij}^{(s_2)} \dots Q_{ij}^{(s_k)} &= \E[Q_{12}^{(s_1)} Q_{12}^{(s_2)} \dots Q_{12}^{(s_k)}] + \caO(N^{-1}), \\
	\frac{1}{N} \sum_{i=1}^N Q_{ii}^{(s_1)} Q_{ii}^{(s_2)} \dots Q_{ii}^{(s_k)} &= \E[Q_{11}^{(s_1)} Q_{11}^{(s_2)} \dots Q_{11}^{(s_k)}] + \caO(N^{-\frac{1}{2}}),
\end{split} \eeq
for any $s_1, s_2, \dots, s_k \geq 1$.

Assume $\bsH_0$. The likelihood ratio is
\beq \begin{split} \label{eq:LR_decomp_iid}
	\caL(Y;\SNR) &= \frac{1}{2^N} \sum_{\bsx} \prod_{i, j=1}^N \frac{p(\sqrt{N} Y_{ij}-\sqrt{\SNR N} x_i x_j)}{p(\sqrt{N} Y_{ij})} \\
	&= \frac{1}{2^N} \sum_{\bsx} \prod_{i \neq j} \frac{p(\sqrt{N} Y_{ij}-\sqrt{\SNR N} x_i x_j)}{p(\sqrt{N} Y_{ij})} \prod_k \frac{p(\sqrt{N} Y_{kk}-\sqrt{\SNR N} x_k^2)}{p(\sqrt{N} Y_{kk})}\,.
\end{split} \eeq
As in the spiked Wigner case, the diagonal part can be handled separately, since it does not depend on the spike or the off-diagonal part.
Since $Q^{(s)}_{ij}$ is an odd function of $\sqrt{N} Y_{ij}$ if $s$ is odd and an even function if $s$ is even, by the Taylor expansion,
\beq \begin{split}
	&\frac{p(\sqrt{N} Y_{ij}-\sqrt{\SNR N} x_i x_j)}{p(\sqrt{N} Y_{ij})} \\
	&= 1 -\sqrt{\SNR N} Q^{(1)}_{ij} x_i x_j + \frac{\SNR Q^{(2)}_{ij}}{2N} - \frac{\SNR \sqrt{\SNR}}{6\sqrt{N}} Q^{(3)}_{ij} x_i x_j + \frac{\SNR^2 Q^{(4)}_{ij}}{24N^2} + \caO(N^{-\frac{5}{2}}),
\end{split} \eeq
uniformly on $i$ and $j$. Following the decomposition in \eqref{eq:decompose}, we thus get
\beq \begin{split} \label{eq:iid_decomp}
	\log \frac{1}{2^N} \sum_{\bsx} \prod_{i\neq j} \frac{p(\sqrt{N} Y_{ij}-\sqrt{\SNR N} x_i x_j)}{p(\sqrt{N} Y_{ij})} 
	= \log \frac{1}{2^N} \sum_{\bsx} \exp \left( \sum_{i < j} A^*_{ij} x_i x_j + \sum_{i < j} (B^*_{ij}+C^*_{ij}) + \caO(N^{-\frac{1}{2}}) \right),
\end{split} \eeq
where we let
\beq \begin{split}
	& A^*_{ij}:= -\sqrt{\SNR N} \bigg( Q^{(1)}_{ij} + Q^{(1)}_{ji}  + \frac{\SNR}{6N} \Big( Q^{(3)}_{ij} + Q^{(3)}_{ji} - 3 Q^{(1)}_{ij} Q^{(2)}_{ij} - 3 Q^{(1)}_{ji} Q^{(2)}_{ji} + 2 (Q^{(1)}_{ij})^3 + 2 (Q^{(1)}_{ji})^3 \Big) \bigg), \\
	& B^*_{ij}:= \frac{\SNR}{2N} \left( Q^{(2)}_{ij} + Q^{(2)}_{ji} - (Q^{(1)}_{ij})^2 - (Q^{(1)}_{ji})^2 \right), \\
	& C^*_{ij}:= \frac{\SNR^2}{24N^2} \bigg( Q^{(4)}_{ij} + Q^{(4)}_{ji} -3 (P^{(2)}_{ij})^2 -3 (P^{(2)}_{ji})^2 - 4Q^{(1)}_{ij} Q^{(3)}_{ij} - 4Q^{(1)}_{ji} Q^{(3)}_{ji} \\
	&\qquad \qquad \qquad \qquad + 12(Q^{(1)}_{ij})^2 Q^{(2)}_{ij} + 12(Q^{(1)}_{ji})^2 Q^{(2)}_{ji} -6(P^{(1)}_{ij})^4 -6(P^{(1)}_{ji})^4 \bigg).
\end{split} \eeq
Note that from Lemma \ref{lem:stochastic_bound},
\beq
	A^*_{ij} = \caO(\sqrt{N}), \quad B^*_{ij} = \caO(N^{-1}), \quad C^*_{ij} = \caO(N^{-2}).
\eeq

\subsection{Spin glass part} \label{subsec:spin_iid}

We have the following result for the convergence of the term involving $A^*$. Let $\E_{B^*}$ and $\Var_{B^*}$ denote the conditional expectation and the conditional variance given $B^*$, respectively. The counterpart of Proposition \ref{prop:Z_limit} is as follows:
\begin{prop} \label{prop:Z_lim_iid}
Set
\beq
	Z^*:= \frac{1}{2^N} \sum_{\bsx} \exp \left( \sum_{i<j} A^*_{ij} x_i x_j \right).
\eeq
Then, there exist random variables $\zeta^*$ and $(\zeta^*)'$ such that
\beq
	\log Z^* = \log \zeta^* + (\zeta^*)' + \caO(N^{-1}),
\eeq
where $\zeta^*$ and $(\zeta^*)'$ satisfy the following
\begin{enumerate}
\item The conditional distribution of $\log \zeta^*$ given $B^*$ converges in distribution to $\caN (-\nu^*, 2\nu^*)$, with 
\beq \label{eq:nu*}
	\nu^*:= \sum_{k=3}^\infty \frac{(2\SNR F)^k}{4k} = -\frac{1}{4} \left( \log (1- 2\SNR F) + 2\SNR F + 2 \SNR^2 F^2 \right).
\eeq

\item Conditional on $B^*$,
\beq
	(\zeta^*)' = \frac{1}{2N^2} \sum_{i<j} \E_{B^*} [(A^*_{ij})^2] - \frac{\SNR^2}{12} \E[ (Q^{(1)}_{12})^4 ] - \frac{\SNR^2}{4} \E[ (Q^{(1)}_{12})^2 ]^2 + U^*,
\eeq
where $U^*$ is a random variable whose asymptotic law is a centered Gaussian with variance 
\beq
	\theta^* := \frac{\SNR^2}{8} \E \left[ \Var_{B^*} \big( (Q^{(1)}_{12} + Q^{(1)}_{12})^2 \big) \right].
\eeq

\item The fluctuations $\log \zeta^*$ and $(\zeta^*)'$ are asymptotically orthogonal to each other under $L_{B^*}^2$, the conditional $L^2$-norm given $B^*$.

\end{enumerate}
\end{prop}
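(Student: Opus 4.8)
The plan is to follow Section~\ref{sec:indep} essentially verbatim, the only structural change being that the ``effective edge weight'' acquires doubled variance. The first point to record is that, although $A^*_{ij}$ is assembled from the two matrix entries $Y_{ij}$ and $Y_{ji}$, the family $(A^*_{ij}:1\le i<j\le N)$ is still mutually independent: distinct unordered pairs $\{i,j\}$ use disjoint entries of $Y$, so all the independence exploited in Section~\ref{sec:indep} remains available. Using $\exp(A^*_{ij}x_ix_j)=\cosh(A^*_{ij}/N)+Nx_ix_j\sinh(A^*_{ij}/N)$ (valid as $x_ix_j=\pm N^{-1}$), set
\[
	\zeta^*:=\frac{1}{2^N}\sum_{\bsx}\prod_{i<j}\Big(1+Nx_ix_j\tanh\frac{A^*_{ij}}{N}\Big),\qquad
	(\zeta^*)':=\sum_{i<j}\Big(\frac{(A^*_{ij})^2}{2N^2}-\frac{(A^*_{ij})^4}{12N^4}\Big),
\]
so that, exactly as in the derivation of \eqref{eq:zeta'}, Taylor expanding $\log\cosh$ gives $\log Z^*=\log\zeta^*+(\zeta^*)'+\caO(N^{-1})$.

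For the conditional law of $(\zeta^*)'$ I would invoke the Lyapunov CLT for the array $\big((A^*_{ij})^2/(2N^2)\big)$ conditional on $B^*$, as in Section~\ref{subsec:zeta'}. The only genuinely new computation is the quartic correction: since $A^*_{ij}=-\sqrt{\SNR N}\,(Q^{(1)}_{ij}+Q^{(1)}_{ji})+\caO(N^{-1/2})$, Lemma~\ref{lem:LLN} gives $\sum_{i<j}(A^*_{ij})^4/N^4\to\frac{\SNR^2}{2}\,\E[(Q^{(1)}_{12}+Q^{(1)}_{21})^4]$, and because $Q^{(1)}_{12}$ and $Q^{(1)}_{21}$ are i.i.d.\ and centered, $\E[(Q^{(1)}_{12}+Q^{(1)}_{21})^4]=2\E[(Q^{(1)}_{12})^4]+6\E[(Q^{(1)}_{12})^2]^2$; this produces precisely the two subtracted constants $\frac{\SNR^2}{12}\E[(Q^{(1)}_{12})^4]$ and $\frac{\SNR^2}{4}\E[(Q^{(1)}_{12})^2]^2$ in the claimed expansion of $(\zeta^*)'$. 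The conditional variance equals $\frac{1}{4N^4}\sum_{i<j}\Var_{B^*}\big((A^*_{ij})^2\big)$, which by Lemma~\ref{lem:LLN} converges to $\frac{\SNR^2}{8}\E\big[\Var_{B^*}\big((Q^{(1)}_{12}+Q^{(1)}_{21})^2\big)\big]=\theta^*$, giving the stated centered Gaussian limit for $U^*$.

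For the conditional law of $\log\zeta^*$ I would transcribe Sections~\ref{subsec:zeta}--\ref{subsec:proof_Lemma_w_gamma} with $(A^*,B^*)$ in place of $(A,B)$. Expanding the product defining $\zeta^*$ as a polynomial in the $x_k$, only monomials with every vertex of even multiplicity survive, so $\zeta^*=\sum_\Gamma w^*(\Gamma)$ with $w^*(\Gamma)=\prod_{(i,j)\in E(\Gamma)}\tanh(A^*_{ij}/N)$ summed over simple closed graphs $\Gamma$; Lemma~\ref{lem:zeta_close} then gives $\zeta^*-\prod_\gamma(1+w^*(\gamma))\to0$ in probability, and the Taylor bound $1+w^*(\gamma)=\exp\big((w^*(\gamma)-w^*(\gamma)^2/2)(1+\caO(N^{-1}))\big)$ reduces matters to the limits of $\xi^*:=\sum_\gamma w^*(\gamma)$ and $\eta^*:=\sum_\gamma w^*(\gamma)^2$. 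The single estimate that changes is the edge bound: $\E[(A^*_{12})^2]=\SNR N\,\E[(Q^{(1)}_{12}+Q^{(1)}_{21})^2]+\caO(1)=2\SNR F\,N+\caO(1)$, so \eqref{eq:max_bound} becomes $\E|w^*(\gamma)|^2\le(2\SNR F+\epsilon)^k/N^k$ when $|E(\gamma)|=k$, and then \eqref{eq:second_moment}--\eqref{eq:limiting_second_moment} give $\E\|\xi^*_k\|_{L^2_{B^*}}^2\to(2\SNR F)^k/(2k)$ and $\E[\eta^*]\to\sum_{k\ge3}(2\SNR F)^k/(2k)=2\nu^*$. The variance controls $\Var\big(\E_{B^*}[(\xi^*)^2]\big)\to0$, $\Var(\eta^*)\to0$ and the Gaussian-moment identity \eqref{eq:Gaussian_moment} go through unchanged, the relevant multigraph counting being insensitive to the asymmetry; hence $\log\zeta^*=\xi^*-\eta^*/2+\caO(N^{-1})$ is, conditionally on $B^*$, asymptotically $\caN(-\nu^*,2\nu^*)$. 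As in Section~\ref{subsec:zeta}, the fluctuation of $\log\zeta^*$ is carried by $\xi^*$, a sum of products over cycles of length at least $3$, which is orthogonal under $L^2_{B^*}$ to the single-edge quantities $(A^*_{ij})^2$ driving $(\zeta^*)'$; this gives part~(3).

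The step needing the most care is not any individual estimate but verifying that the asymmetry enters \emph{only} through (i) the replacement $F\mapsto2F$ in the quadratic edge weights and (ii) the extra cross-term $6\E[(Q^{(1)}_{12})^2]^2$ in the quartic moment. In particular one must check that no new dependence among the $A^*_{ij}$ is created---which holds because distinct unordered pairs $\{i,j\}$ involve disjoint entries $Y_{ij},Y_{ji}$---and that every moment identity used in the Wigner argument stays valid after $P^{(s)}_{ij}$ is replaced by $Q^{(s)}_{ij}$ and each occurrence of a pair $\{i<j\}$ carries two independent copies indexed by $(i,j)$ and $(j,i)$.
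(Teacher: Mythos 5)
Your proposal is correct and follows essentially the same route as the paper's proof in Appendix~C: the same $\cosh$--$\sinh$ splitting into $\zeta^*$ and $(\zeta^*)'$, the conditional Lyapunov CLT for $(\zeta^*)'$ with the quartic correction computed via $\E[(Q^{(1)}_{12}+Q^{(1)}_{21})^4]=2\E[(Q^{(1)}_{12})^4]+6\E[(Q^{(1)}_{12})^2]^2$, and the simple-loop expansion with $w^*(\gamma)$ yielding $\E\|\xi^*_k\|_{L^2_{B^*}}^2\to(2\SNR F)^k/(2k)$. Your remark that independence of the $A^*_{ij}$ across distinct unordered pairs is what makes the Wigner argument transfer verbatim is exactly the right thing to emphasize, and you have also correctly written $Q^{(1)}_{21}$ where the paper's statement of $\theta^*$ has a typo reading $Q^{(1)}_{12}$ twice.
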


Proposition \ref{prop:Z_lim_iid} is proved in Section \ref{sec:indep_iid}.

\subsection{CLT part} \label{subsec:CLT_iid}

From Proposition \ref{prop:Z_lim_iid}, we find that the terms involving $B^*$ in the right hand side of \eqref{eq:iid_decomp} are
\beq \label{eq:B*_sum}
	\sum_{i<j} B^*_{ij} + \frac{1}{2N^2} \sum_{i<j} \E[(A^*_{ij})^2|B^*_{ij}].
\eeq
which is the sum of i.i.d. random variables that depend only on $B^*_{ij}$. By the central limit theorem, it converges to a normal random variable whose mean and variance we compute in this subsection.

We first compute the mean of \eqref{eq:B*_sum}. By definition,
\[
	\E[B^*_{ij}] = \frac{\SNR}{N} \E \left[ Q^{(2)}_{ij} - (Q^{(1)}_{ij})^2 \right] = -\frac{\SNR F}{N}
\]
and using integration by parts formulas in \eqref{eq:int_part_1} and \eqref{eq:int_part_2}, we find that
\beq \begin{split}
	&\E \left[ \E[(A^*_{ij})^2|B^*_{ij}] \right] = \E[(A^*_{ij})^2] 
	= 2\SNR N \E\left[ (Q^{(1)}_{ij})^2 \right] + \frac{2\SNR^2}{3} \E\left[ Q^{(1)}_{ij} Q^{(3)}_{ij} - 3 (Q^{(1)}_{ij})^2 Q^{(2)}_{ij} + 2 (Q^{(1)}_{ij})^4 \right] + O(N^{-1}) \\
	&= 2\SNR N \E\left[ (Q^{(1)}_{ij})^2 \right] + \frac{2\SNR^2}{3} \E\left[ Q^{(1)}_{ij} Q^{(3)}_{ij} \right] + O(N^{-1}),
\end{split} \eeq
where we used the independence of $Q^{(s)}_{ij}$ and $Q^{(s)}_{ji}$ and also that $\E[Q^{(1)}_{ij}] = 0$. We thus get
\beq \begin{split} \label{eq:B*_mean}
	\E \left[ \sum_{i<j} B^*_{ij} + \frac{1}{2N^2} \sum_{i<j} \E[(A^*_{ij})^2|B^*_{ij}] \right] = \frac{\SNR^2}{6} \E\left[ Q^{(1)}_{12} Q^{(3)}_{12} \right] + O(N^{-1}).
\end{split} \eeq

We next compute the the variance of \eqref{eq:B*_sum}. We have from the definition that
\[
	\E[(B^*_{ij})^2] = \frac{\SNR^2}{2N^2} \E \left[ (Q^{(2)}_{ij})^2 -2 (Q^{(1)}_{ij})^2 Q^{(2)}_{ij} + (Q^{(1)}_{ij})^4 \right] + \frac{\SNR^2 F^2}{2N^2},
\]
\[
	\frac{1}{N^2} \E[(A^*_{ij})^2 B^*_{ij}] = \frac{\SNR^2}{N^2} \E \left[ (Q^{(1)}_{ij})^2 \Big( Q^{(2)}_{ij} - (Q^{(1)}_{ij})^2 \Big) \right] - \frac{\SNR^2 F^2}{N^2} + O(N^{-3}),
\]
and
\[
	\frac{1}{4N^4} \E[ (\E[(A^*_{ij})^2|B^*_{ij}])^2] = \frac{\SNR^2}{4N^2} \E[ (\E[(Q^{(1)}_{ij} + (Q^{(1)}_{ji})^2|B^*_{ij}])^2] + O(N^{-3}).
\]
We apply the identity
\[
	\E[(Q^{(1)}_{ij} + Q^{(1)}_{ji})^4] = \E \left[ (\E[(Q^{(1)}_{ij} + Q^{(1)}_{ji})^2|B^*_{ij}])^2 + \Var \big( (Q^{(1)}_{ij} + Q^{(1)}_{ji})^2|B^*_{ij} \big) \right],
\]
which corresponds to \eqref{eq:var_identity}, and also
\[
	\E[(Q^{(1)}_{ij} + Q^{(1)}_{ji})^4] = 2 \E[(Q^{(1)}_{ij})^4] + 6 \E[(Q^{(1)}_{ij})^2]^2\,.
\]
We then obtain
\beq \begin{split} \label{eq:B*_var}
	\E\left[ \left( B^*_{ij} + \frac{\E[(A^*_{ij})^2|B^*_{ij}]}{2N^2} \right)^2 \right] = \frac{\SNR^2}{4N^2} \E \left[ 2(Q^{(2)}_{ij})^2 - \Var \big( (Q^{(1)}_{ij})^2|B^*_{ij} \big) \right] + \frac{\SNR^2 F^2}{N^2} + O(N^{-3}).
\end{split} \eeq

We thus conclude from \eqref{eq:B*_mean} and \eqref{eq:B*_var} that
\beq \begin{split} \label{eq:B*_normal}
	&\sum_{i<j} B^*_{ij} + \frac{1}{2N^2} \sum_{i<j} \E[(A^*_{ij})^2|B^*_{ij}] \\
	&\qquad \Rightarrow \caN \left( \frac{\SNR^2}{6} \E\left[ Q^{(1)}_{12} Q^{(3)}_{12} \right], \frac{\SNR^2}{8} \E\left[2(Q^{(2)}_{12})^2 - \Var_{B^*} \big( (Q^{(1)}_{12} + Q^{(1)}_{21})^2 \big) \right]\right).
\end{split} \eeq

\subsection{LLN part and the diagonal part} \label{subsec:LLN_iid}

From the definition of $C^*_{ij}$ in \eqref{eq:ABC} and the integration by parts formula in \eqref{eq:int_part_3}, we find that
\beq \label{eq:LLN_iid}
	\sum_{i<j} C^*_{ij} = -\frac{\SNR^2}{24} \E[Q^{(1)}_{12} Q^{(3)}_{12}] + \caO(N^{-1}).
\eeq

For the diagonal part in \eqref{eq:LR_decomp_iid}, it can be readily checked that
\beq \label{eq:diag_iid}
	\log \prod_k \frac{p(\sqrt{N} Y_{kk}-\sqrt{\SNR N} x_k^2)}{p(\sqrt{N} Y_{kk})} \Rightarrow \caN (-\frac{\SNR F}{2}, \SNR F).
\eeq

\subsection{Proof of Theorem \ref{thm:iid}}

We now combine Proposition \ref{prop:Z_lim_iid}, Equations \eqref{eq:B*_normal}, \eqref{eq:LLN_iid}, and \eqref{eq:diag_iid} to find that $\log \caL(Y;\lambda)$ in \eqref{eq:LR_decomp_iid} converges to the Gaussian whose mean is
\beq \begin{split}
	&\frac{1}{4} \left( \log (1-2\SNR F) + 2\SNR F+ 2\SNR^2 F^2 \right) - \frac{\SNR^2}{12} \E[ (Q^{(1)}_{12})^4 ] - \frac{\SNR^2 F^2}{4} + \frac{\SNR^2}{6} \E[ Q^{(1)}_{12} Q^{(3)}_{12}] -\frac{\SNR^2}{24} \E[Q^{(1)}_{12} Q^{(3)}_{12}] -\frac{\SNR F}{2} \\
	&= \frac{1}{4} \left( \log (1-\SNR F) + \SNR^2 F^2 \right) - \frac{\SNR^2}{8} \E[ (Q^{(2)}_{12})^2] = -\rho^*
\end{split} \eeq
and variance
\beq \begin{split}
	&-\frac{1}{2} \left( \log (1-2\SNR F) + 2\SNR F+ 2\SNR^2 F^2 \right) + \frac{\SNR^2}{8} \E\left[\Var_{B^*} \big( (Q^{(1)}_{12} + Q^{(1)}_{21})^2 \big) \right] \\
	&\qquad \qquad + \frac{\SNR^2}{8} \E \left[ 2(Q^{(2)}_{12})^2 - \Var_{B^*} \big( (Q^{(1)}_{12} + Q^{(1)}_{21})^2 \big) \right] + \frac{\SNR^2 F^2}{2} + \SNR F = 2\rho^*.
\end{split} \eeq
This proves Theorem \ref{thm:iid}.

\section{Proof of Proposition \ref{prop:Z_lim_iid}} \label{sec:indep_iid}

In this section, we prove Proposition \ref{prop:Z_lim_iid} by following the strategy in Section \ref{sec:indep}. We decompose the fluctuation of $Z^*$ into two parts. Let
\beq
	\zeta^* := \frac{1}{2^N} \sum_{\bsx} \prod_{i<j} \left( 1 + N x_i x_j \tanh \frac{A^*_{ij}}{N} \right).
\eeq
From the direct calculation involving the Taylor expansion as in Section \ref{sec:indep}, we get
\beq
	\log Z^* - \log \zeta^* = \sum_{i<j} \left( \frac{(A^*_{ij})^2}{2N^2} - \frac{(A^*_{ij})^4}{12N^4} \right) + \caO(N^{-1}),
\eeq
and we define
\beq \label{eq:zeta*'} 
	(\zeta^*)' := \sum_{i<j} \left( \frac{(A^*_{ij})^2}{2N^2} - \frac{(A^*_{ij})^4}{12N^4} \right).
\eeq
By definition, $\log Z^* = \zeta^* + (\zeta^*)' + \caO(N^{-1})$. 

For the second part of Proposition \ref{prop:Z_lim_iid}, we notice that the array $(\frac{(A^*_{ij})^2}{2N^2}: 1 \leq i < j \leq N)$ almost surely satisfies Lyapunov's condition for the central limit theorem conditional on $B^*$. Thus, if we define the random variable
\beq
	U^* = (\zeta^*)' -\left( \frac{1}{2N^2} \sum_{i<j} \E_{B^*} [(A^*_{ij})^2] - \frac{\SNR^2}{12} \E[ (Q^{(1)}_{12})^4 ] - \frac{\SNR^2}{4} \E[ (Q^{(1)}_{12})^2 ]^2 \right),
\eeq
its asymptotic law is a centered Gaussian with variance 
\beq
	\theta^* := \frac{\SNR^2}{8} \E \left[ \Var_{B^*} \big( (Q^{(1)}_{12} + Q^{(1)}_{12})^2 \big) \right].
\eeq 
This proves the second part of Proposition \ref{prop:Z_lim_iid}.

Recall that $\Gamma$ is a simple graph on the vertex set $[n] := \{ 1, 2, \dots, n \}$ with no self-edges and no vertices of odd degree and $E(\Gamma)$ is the edge set of $\Gamma$. We define 
\[
	w^*(\Gamma):= \prod_{(i, j) \in E(\Gamma)} \tanh \Big( \frac{A^*_{ij}}{N} \Big),
\]
which gives us
\[
	\zeta^* = \sum_{\Gamma} w^*(\Gamma). 
\]

We now introduce the results on $w^*$ that correspond to Lemmas \ref{lem:zeta_close} and \ref{lem:w_gamma}. Recall that we denote simple loops by the lowercase $\gamma$.
\begin{lemma} \label{lem:zeta*}
Let $\xi^*:=\sum_{\gamma} w^*(\gamma)$, $\eta^*:=\sum_{\gamma} w^*(\gamma)^2$. Let $\nu$ be given as in \eqref{eq:nu*}. Then, as $N \to \infty$,
\begin{enumerate}
	\item[(i)] The random variable $\zeta^* - \prod_{\gamma} (1+w^*(\gamma))$ converges in probability to 0,
  \item[(ii)] The conditional law $\xi^*$ given $B$ converges in probability to a centered Gaussian with variance $2\nu^*$, and
  \item[(iii)] $\eta^*$ converges in probability to $2\nu$.
\end{enumerate} 
\end{lemma}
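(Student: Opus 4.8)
The plan is to mirror the three-part argument of Section~\ref{sec:indep} almost verbatim, treating $A^*_{ij}$ as the fundamental building block in place of $A_{ij}$ and $B^*$ in place of $B$. The only structural change is that the leading term of $A^*_{ij}$ is now $-\sqrt{\SNR N}(Q^{(1)}_{ij}+Q^{(1)}_{ji})$, so in all the second-moment computations the role played by $\E[(P^{(1)}_{12})^2]=F$ is replaced by $\E[(Q^{(1)}_{12}+Q^{(1)}_{21})^2]=2F$, using the independence of $Q^{(1)}_{ij}$ and $Q^{(1)}_{ji}$ and $\E[Q^{(1)}_{ij}]=0$. First I would establish the analogues of \eqref{eq:max_bound} and \eqref{eq:weight_bound}: since $|\tanh x|\le|x|$ and $A^*_{ij}=\caO(\sqrt N)$, one gets $\E|w^*(\gamma)|^2\le (2\SNR F+\epsilon)^k/N^k$ when $|E(\gamma)|=k$, and $\max_{|E(\gamma)|=k}\E_{B^*}|w^*(\gamma)|^s=\caO(N^{-s|E(\gamma)|/2})$, exactly as before but with the constant $\SNR F$ bumped to $2\SNR F$. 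These bounds are what the rest of the proof consumes.

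Part~(i) is immediate: the argument for Lemma~\ref{lem:zeta_close} in \cite{ALR87} only uses that the weights $w(\Gamma)$ are products of $\tanh(A_{ij}/N)$ over edges, that distinct loops are orthogonal, and the moment bounds above; none of this cares whether the entries came from a symmetric or an asymmetric matrix, so $\zeta^*-\prod_\gamma(1+w^*(\gamma))\to 0$ in probability follows with no change. For part~(iii), I would repeat the computation \eqref{eq:second_moment}: the number of simple loops of length $k$ on $[N]$ is still $P(N,k)/(2k)$, the loops are orthogonal (both under $L^2$ and $L^2_{B^*}$, by the same edge-disjointness argument), and now
\[
	\E\|\xi^*_k\|_{L^2_{B^*}}^2 = \frac{P(N,k)}{2k}\,\E\!\left[\tanh^2\frac{A^*_{12}}{N}\right]^k \to \frac{N^k}{2k}\left(\frac{2\SNR F}{N}\right)^{\!k} = \frac{(2\SNR F)^k}{2k},
\]
with the uniform domination $\E\|\xi^*_k\|_{L^2_{B^*}}^2\le (2\SNR F+\epsilon)^k/(2k)$. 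Summing over $k\ge 3$ and using $2\SNR F<1$ gives $\E[\eta^*]\to\sum_{k\ge3}(2\SNR F)^k/(2k)=2\nu^*$, and the variance-vanishing argument via \eqref{eq:variance_L^2-norm}--\eqref{eq:bound_cov} (finitely many isomorphism types, each contributing $\caO(N^{|E(\gamma_1)|+|E(\gamma_2)|-2})$ choices against an $N^{-(|E(\gamma_1)|+|E(\gamma_2)|)}$ factor) carries over to show $\Var(\eta^*)\to0$, hence $\eta^*\to 2\nu^*$ in probability. (Here $2\nu$ should read $2\nu^*$; the statement has a typo.)

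Part~(ii) follows from the conditional-Gaussian-moment computation \eqref{eq:Gaussian_moment}--\eqref{eq:higher_moment}: conditionally on $B^*$ the $w^*(\gamma)$'s are independent and centered, odd conditional moments of $\xi^*$ vanish, and the cancellation of the non-intersecting double-loop pairings against $\tfrac{(2s)!}{2^s s!}(\E_{B^*}[(\xi^*)^2])^s$ leaves only multigraphs $M$ with even edge multiplicity that are not disjoint unions of $s$ double loops, for which the vertex-count bound $4s\ge 4|V(M)|+4$ forces each isomorphism type to contribute $\caO(N^{-1})$; combined with part~(iii) this gives the conditional CLT with limiting variance $2\nu^*$. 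The orthogonality claim in the main Proposition~\ref{prop:Z_lim_iid} then follows because the fluctuation of $\log\zeta^*$ is driven by $\xi^*=\sum_\gamma w^*(\gamma)$, which is a sum of products of $\tanh(A^*_{ij}/N)$ over loops, while the fluctuation of $(\zeta^*)'$ is driven by $\sum_{i<j}(A^*_{ij})^2$; a single edge $(A^*_{ij})^2$ is $L^2_{B^*}$-orthogonal to every loop weight, exactly as in Section~\ref{subsec:zeta}. The main obstacle, such as it is, is bookkeeping: one must be careful that the ``effective'' constant in every estimate is $2F$ rather than $F$ (and in particular that the $\frac{\SNR^2}{4}\E[(Q^{(1)}_{12})^2]^2$ correction term appearing in $(\zeta^*)'$, which comes from the cross term $2Q^{(1)}_{ij}Q^{(1)}_{ji}$ inside $(A^*_{ij})^2$, is accounted for when one separates the $B^*$-measurable part of $\sum(A^*_{ij})^2$ from the Gaussian fluctuation $U^*$); but there is no new analytic difficulty beyond what Section~\ref{sec:indep} already resolves.
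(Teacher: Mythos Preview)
Your proposal is correct and follows essentially the same approach as the paper: both arguments reduce to the proof of Lemma~\ref{lem:w_gamma} in Section~\ref{subsec:proof_Lemma_w_gamma}, with the single systematic change that $\E[(P^{(1)}_{12})^2]=F$ is replaced by $\E[(Q^{(1)}_{12}+Q^{(1)}_{21})^2]=2F$, so that $\SNR F$ becomes $2\SNR F$ in every moment estimate. You also correctly identify the typo ($2\nu$ should be $2\nu^*$); one minor imprecision is your phrase ``conditionally on $B^*$ the $w^*(\gamma)$'s are independent''---they are orthogonal, not independent when loops share edges---but this does not affect the argument, and the paper is equally loose on this point.
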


As in the proof of Proposition \ref{prop:Z_limit}, from the relation
\[
	\prod_{\gamma} \left( 1+ w^*(\gamma) \right) = \exp \left( \sum_{\gamma} \Big( w^*(\gamma) - \frac{w^*(\gamma)^2}{2} \Big)(1+ \caO(N^{-1})) \right) = \exp \left( \xi^* - \frac{\eta^*}{2} + \caO(N^{-1}) \right),
\]
the first part of Proposition \ref{prop:Z_lim_iid} follows from Lemma \ref{lem:zeta*}. The third part of Proposition \ref{prop:Z_lim_iid} is also obvious since $(A^*_{ij})^2$, which generates fluctuation of $(\zeta^*)'$, is orthogonal to $\tanh \frac{A^*_{ij}}{N}$ under $L_{B^*}^2$.

We now prove Lemma \ref{lem:zeta*}. %The first part is from the result in Section 3 of \cite{ALR87}. 
In order to prove the limit of $\E[\eta^*]$, we let
\[
	\xi^*_k:= \sum_{\gamma:|E(\gamma)|=k} w^*(\gamma).
\]
Recall that $P(N,k) =\frac{N!}{(N-k)!}$ and the number of possible simple loops of length $k$ in the vertex set $[N]$ is $\frac{P(N,k)}{2k}$. We then have
\beq \begin{split}
	\E ||\xi^*_k ||_{L^2_{B^*}}^2 &= \sum_{\gamma:|E(\gamma)|=k} \prod_{(i, j) \in E(\gamma) } \E \left[ \tanh^2 \frac{A^*_{ij}}{N} \right] = \frac{P(N,k)}{2k} \E \left[ \tanh^2 \frac{A^*_{ij}}{N} \right]^k \\
	&\to \frac{N^k}{2k}\E \left[ (\SNR N)^k \frac{(Q^{(1)}_{ij} + Q^{(1)}_{ji})^2}{N^2} \right]^k = \frac{(2\SNR F)^k}{2k}.
\end{split} \eeq
Moreover, for any given $\epsilon>0$, there exists a deterministic $N'_0 \in \N$ such that for all $N \geq N_0$,
\beq
    \E ||\xi^*_k ||_{L^2_{B^*}}^2 \leq \frac{(2\SNR F + \epsilon)^k}{2k}.
\eeq
Thus, if $2 \SNR F <1$, by the orthogonality and the dominated convergence theorem we have
\beq
\E[\eta^*] = \E [(\xi^*)^2] = \E [\E_{B^*} [(\xi^*)^2]] \to \sum_{k=3}^\infty \frac{(2\SNR F)^k}{2k} = 2\nu^*
\eeq
as $N \to \infty$. Now, the proof of the second and the third parts of Lemma \ref{lem:zeta*} is the verbatim copy of the proof of Lemma \ref{lem:w_gamma} in Appendix \ref{sec:proof_Lemma_w_gamma} and we omit the detail.

\section{Sketch of the proof for the Spiked IID matrices} \label{sec:asymmetric}

In this section, we provide the sketch of the proof for Theorem \ref{thm:iid}. Recall that
\beq \begin{split} \label{eq:iid_LR}
	\caL(Y;\SNR) &= \frac{1}{2^N} \sum_{\bsx} \prod_{i, j=1}^N \frac{p(\sqrt{N} Y_{ij}-\sqrt{\SNR N} x_i x_j)}{p(\sqrt{N} Y_{ij})} \\
	&= \frac{1}{2^N} \sum_{\bsx} \prod_{i \neq j} \frac{p(\sqrt{N} Y_{ij}-\sqrt{\SNR N} x_i x_j)}{p(\sqrt{N} Y_{ij})} \prod_k \frac{p(\sqrt{N} Y_{kk}-\sqrt{\SNR N} x_k^2)}{p(\sqrt{N} Y_{kk})}\,.
\end{split} \eeq
As in the spiked Wigner case, the diagonal part can be handled separately, since it does not depend on the spike or the off-diagonal part.
Following the decomposition in \eqref{eq:decompose}, we set
\beq
	Q^{(s)}_{ij} := \frac{p^{(s)}(\sqrt{N} Y_{ij})}{p(\sqrt{N} Y_{ij})}.
\eeq
Then, we get
\beq \begin{split} \label{eq:iid_decompose}
	&\log \frac{1}{2^N} \sum_{\bsx} \prod_{i\neq j} \frac{p(\sqrt{N} Y_{ij}-\sqrt{\SNR N} x_i x_j)}{p(\sqrt{N} Y_{ij})} \\
	&= \log \frac{1}{2^N} \sum_{\bsx} \exp \left( \sum_{i < j} A^*_{ij} x_i x_j + \sum_{i < j} (B^*_{ij}+C^*_{ij}) + \caO(N^{-\frac{1}{2}}) \right),
\end{split} \eeq
where we let
\beq \begin{split}
	& A^*_{ij}:= -\sqrt{\SNR N} \bigg( Q^{(1)}_{ij} + Q^{(1)}_{ji} 
	+ \frac{\SNR}{6N} \Big( Q^{(3)}_{ij} + Q^{(3)}_{ji} - 3 Q^{(1)}_{ij} Q^{(2)}_{ij} - 3 Q^{(1)}_{ji} Q^{(2)}_{ji} + 2 (Q^{(1)}_{ij})^3 + 2 (Q^{(1)}_{ji})^3 \Big) \bigg), \\
	& B^*_{ij}:= \frac{\SNR}{2N} \left( Q^{(2)}_{ij} + Q^{(2)}_{ji} - (Q^{(1)}_{ij})^2 - (Q^{(1)}_{ji})^2 \right), \\
	& C^*_{ij}:= \frac{\SNR^2}{24N^2} \bigg( Q^{(4)}_{ij} + Q^{(4)}_{ji} -3 (P^{(2)}_{ij})^2 -3 (P^{(2)}_{ji})^2 - 4Q^{(1)}_{ij} Q^{(3)}_{ij} - 4Q^{(1)}_{ji} Q^{(3)}_{ji} \\
	&\qquad \qquad \qquad + 12(Q^{(1)}_{ij})^2 Q^{(2)}_{ij} + 12(Q^{(1)}_{ji})^2 Q^{(2)}_{ji} -6(P^{(1)}_{ij})^4 -6(P^{(1)}_{ji})^4 \bigg).
\end{split} \eeq

We first consider the spin glass part and prove a counterpart of Proposition \ref{prop:Z_limit}. Set
\beq \label{eq:Z_star}
	Z^*:= \frac{1}{2^N} \sum_{\bsx} \exp \left( \sum_{i<j} A^*_{ij} x_i x_j \right).
\eeq
Then, there exist random variables $\zeta^*$ and $(\zeta^*)'$ such that
\beq
	\log Z^* = \log \zeta^* + (\zeta^*)' + \caO(N^{-1}),
\eeq
where $\log \zeta^*$ and $(\zeta^*)'$ are asymptotically orthogonal to each other under $L_{B^*}^2$ and satisfy the following: The conditional distribution of $\log \zeta^*$ given $B^*$ converges in distribution to $\caN (-\nu^*, 2\nu^*)$, with 
\beq
	\nu^*:= \sum_{k=3}^\infty \frac{(2\SNR F)^k}{4k} = -\frac{1}{4} \left( \log (1- 2\SNR F) + 2\SNR F + 2 \SNR^2 F^2 \right).
\eeq
Further,
\beq
	(\zeta^*)' = \frac{1}{2N^2} \sum_{i<j} \E_{B^*} [(A^*_{ij})^2] - \frac{\SNR^2}{12} \E[ (Q^{(1)}_{12})^4 ] - \frac{\SNR^2}{4} \E[ (Q^{(1)}_{12})^2 ]^2 + U^*,
\eeq
where $U^*$ is a random variable whose asymptotic law is a centered Gaussian with variance 
\beq
	\theta^* := \frac{\SNR^2}{8} \E \left[ \Var_{B^*} \big( (Q^{(1)}_{12} + Q^{(1)}_{12})^2 \big) \right].
\eeq

We next follow the analysis in Section \ref{subsec:CLT} for the CLT part. The terms involving $B^*$ in the right hand side of \eqref{eq:iid_decompose} are
\[
	\sum_{i<j} B^*_{ij} + \frac{1}{2N^2} \sum_{i<j} \E[(A^*_{ij})^2|B^*_{ij}].
\]
By definition,
\[
	\E[B^*_{ij}] = \frac{\SNR}{N} \E \left[ Q^{(2)}_{ij} - (Q^{(1)}_{ij})^2 \right] = -\frac{\SNR F}{N}
\]
and
\beq \begin{split}
	&\E \left[ \E[(A^*_{ij})^2|B^*_{ij}] \right] = \E[(A^*_{ij})^2] \\
	&= 2\SNR N \E\left[ (Q^{(1)}_{ij})^2 \right] + \frac{2\SNR^2}{3} \E\left[ Q^{(1)}_{ij} Q^{(3)}_{ij} - 3 (Q^{(1)}_{ij})^2 Q^{(2)}_{ij} + 2 (Q^{(1)}_{ij})^4 \right] + O(N^{-1}) \\
	&= 2\SNR N \E\left[ (Q^{(1)}_{ij})^2 \right] + \frac{2\SNR^2}{3} \E\left[ Q^{(1)}_{ij} Q^{(3)}_{ij} \right] + O(N^{-1}),
\end{split} \eeq
where we used the independence of $Q^{(s)}_{ij}$ and $Q^{(s)}_{ji}$ and also that $\E[Q^{(1)}_{ij}] = 0$. We thus get
\beq \begin{split} \label{eq:B_star_mean}
	\E \left[ \sum_{i<j} B^*_{ij} + \frac{1}{2N^2} \sum_{i<j} \E[(A^*_{ij})^2|B^*_{ij}] \right] = \frac{\SNR^2}{6} \E\left[ Q^{(1)}_{12} Q^{(3)}_{12} \right] + O(N^{-1}).
\end{split} \eeq
The computation of the variance is similar, and we only remark important changes here as follows:
\[
	\E[(B^*_{ij})^2] = \frac{\SNR^2}{2N^2} \E \left[ (Q^{(2)}_{ij})^2 -2 (Q^{(1)}_{ij})^2 Q^{(2)}_{ij} + (Q^{(1)}_{ij})^4 \right] + \frac{\SNR^2 F^2}{2N^2},
\]
\[
	\frac{1}{N^2} \E[(A^*_{ij})^2 B^*_{ij}] = \frac{\SNR^2}{N^2} \E \left[ (Q^{(1)}_{ij})^2 \Big( Q^{(2)}_{ij} - (Q^{(1)}_{ij})^2 \Big) \right] - \frac{\SNR^2 F^2}{N^2} + O(N^{-3}),
\]
and
\[
	\frac{1}{4N^4} \E[ (\E[(A^*_{ij})^2|B^*_{ij}])^2] = \frac{\SNR^2}{4N^2} \E[ (\E[(Q^{(1)}_{ij} + (Q^{(1)}_{ji})^2|B^*_{ij}])^2] + O(N^{-3}).
\]
Using the identities
\[
	\E[(Q^{(1)}_{ij} + Q^{(1)}_{ji})^4] = \E \left[ (\E[(Q^{(1)}_{ij} + Q^{(1)}_{ji})^2|B^*_{ij}])^2 + \Var \big( (Q^{(1)}_{ij} + Q^{(1)}_{ji})^2|B^*_{ij} \big) \right]
\]
and
\[
	\E[(Q^{(1)}_{ij} + Q^{(1)}_{ji})^4] = 2 \E[(Q^{(1)}_{ij})^4] + 6 \E[(Q^{(1)}_{ij})^2]^2,
\]
we get
\beq \begin{split} \label{eq:B_star_var}
	\E\left[ \left( B^*_{ij} + \frac{\E[(A^*_{ij})^2|B^*_{ij}]}{2N^2} \right)^2 \right] 
	= \frac{\SNR^2}{4N^2} \E \left[ 2(Q^{(2)}_{ij})^2 - \Var \big( (Q^{(1)}_{ij})^2|B^*_{ij} \big) \right] + \frac{\SNR^2 F^2}{N^2} + O(N^{-3}).
\end{split} \eeq

For the LLN part, we simply get
\beq \label{eq:iid_LLN}
	\sum_{i<j} C^*_{ij} = -\frac{\SNR^2}{24} \E[Q^{(1)}_{12} Q^{(3)}_{12}] + \caO(N^{-1}).
\eeq
The computation for the diagonal part coincides with that for the spiked Wigner case, and we get
\beq \label{eq:iid_diag}
	\log \prod_k \frac{p(\sqrt{N} Y_{kk}-\sqrt{\SNR N} x_k^2)}{p(\sqrt{N} Y_{kk})} \Rightarrow \caN (-\frac{\SNR F}{2}, \SNR F).
\eeq

Combining Equations \eqref{eq:iid_LR}, \eqref{eq:Z_star}, \eqref{eq:B_star_mean}, \eqref{eq:B_star_var}, \eqref{eq:iid_LLN}, and \eqref{eq:iid_diag}, we find that $\log \caL(Y;\lambda)$ converges to the Gaussian whose mean is
\beq \begin{split}
	&\frac{1}{4} \left( \log (1-2\SNR F) + 2\SNR F+ 2\SNR^2 F^2 \right) - \frac{\SNR^2}{12} \E[ (Q^{(1)}_{12})^4 ] - \frac{\SNR^2 F^2}{4} 
	+ \frac{\SNR^2}{6} \E[ Q^{(1)}_{12} Q^{(3)}_{12}] -\frac{\SNR^2}{24} \E[Q^{(1)}_{12} Q^{(3)}_{12}] -\frac{\SNR F}{2} \\
	&= \frac{1}{4} \left( \log (1-\SNR F) + \SNR^2 F^2 \right) - \frac{\SNR^2}{8} \E[ (Q^{(2)}_{12})^2] = -\rho^*
\end{split} \eeq
and variance
\beq \begin{split}
	&-\frac{1}{2} \left( \log (1-2\SNR F) + 2\SNR F+ 2\SNR^2 F^2 \right) + \frac{\SNR^2}{8} \E\left[\Var_{B^*} \big( (Q^{(1)}_{12} + Q^{(1)}_{21})^2 \big) \right] \\
	&\qquad \qquad + \frac{\SNR^2}{8} \E \left[ 2(Q^{(2)}_{12})^2 - \Var_{B^*} \big( (Q^{(1)}_{12} + Q^{(1)}_{21})^2 \big) \right] + \frac{\SNR^2 F^2}{2} + \SNR F = 2\rho^*.
\end{split} \eeq
This proves Theorem \ref{thm:iid}.

\end{appendix}

\end{document}